\documentclass[12pt, a4paper,reqno]{amsart}

% Packages
\usepackage[inner=3cm, outer=3cm]{geometry}
\usepackage[english]{babel}
\usepackage[T1]{fontenc}
\usepackage{amsmath}
\usepackage{float}
\usepackage{mathtools}
\usepackage{amsfonts}
\usepackage{amssymb}
\usepackage{setspace}
\usepackage{amsfonts, dsfont}
\usepackage{mathrsfs, enumerate, csquotes, color}
\usepackage{xcolor}
\usepackage[bb=boondox]{mathalfa}
\definecolor{vertfonce}{rgb}{0.20, 0.46, 0.25}
\definecolor{rougefonce}{rgb}{0.64, 0.09, 0.20}
\usepackage[breaklinks=true,
colorlinks=true,
linkcolor=rougefonce,
citecolor=vertfonce]{hyperref}

\usepackage{tikz}
\usetikzlibrary{shapes.misc}
\tikzset{cross/.style={cross out}, minimum size=1pt, draw=black, inner sep =0pt, outer sep=0pt, cross/.default={1pt}}

% Nouveau compteur

%\setlength{\hoffset}{0.6cm} \setlength{\voffset}{-0.8cm}
%\setlength{\textwidth}{15cm}

% Théorèmes

\theoremstyle{plain}
\newtheorem{theorem}{Theorem}[section]
\newtheorem{lemma}[theorem]{Lemma}
\newtheorem{corollary}[theorem]{Corollary}
\newtheorem{proposition}[theorem]{Proposition}

\theoremstyle{definition}
\newtheorem{remark}[theorem]{Remark}

\newtheorem{assumption}[theorem]{Assumption}

\setcounter{tocdepth}{2}

% Commandes ensemblistes

\newcommand{\R}{\mathbb{R}}
\newcommand{\T}{\mathbb{T}}
\newcommand{\C}{\mathbb{C}}
\newcommand{\N}{\mathbb{N}}

% Commandes spectrales

\renewcommand{\Im}{\mathrm{Im}}

% Commandes analytiques

\newcommand{\cO}{\mathscr{O}}

\renewcommand{\leq}{\leqslant}
\renewcommand{\geq}{\geqslant}

\newcommand{\eps}{\varepsilon}

% Couleurs

\def\restriction#1#2{\mathchoice
              {\setbox1\hbox{${\displaystyle #1}_{\scriptstyle #2}$}
              \restrictionaux{#1}{#2}}
              {\setbox1\hbox{${\textstyle #1}_{\scriptstyle #2}$}
              \restrictionaux{#1}{#2}}
              {\setbox1\hbox{${\scriptstyle #1}_{\scriptscriptstyle #2}$}
              \restrictionaux{#1}{#2}}
              {\setbox1\hbox{${\scriptscriptstyle #1}_{\scriptscriptstyle #2}$}
              \restrictionaux{#1}{#2}}}
\def\restrictionaux#1#2{{#1\,\smash{\vrule height .8\ht1 depth .85\dp1}}_{\,#2}}

\author{L. Benedetto}
\author{C. Fermanian Kammerer}
\author{N. Raymond}
\author{É. Vacelet}

\title[The superadiabatic projectors method]{The superadiabatic projectors method applied to the spectral theory of magnetic operators}

\makeatletter

\@addtoreset{equation}{section}
\makeatother

\begin{document}
	
\maketitle

\begin{abstract}
This article deals with a generalization of the superadiabatic projectors method. In a general framework, the well-known superadiabatic projectors are constructed and accurately described in the case of rank one, when a remarkable factorization occurs. We apply these ideas to spectral theory and we explain how our abstract results allow to recover or improve recent results about the semiclassical magnetic Laplacian.
\end{abstract}

%\tableofcontents
	
\section{Introduction} \label{sec:intro}

\subsection{Motivation and context}

This article deals with a generalization of the superadiabatic projectors method. In essence, this method aims at diagonalizing pseudodifferential operators $H^w$ by finding approximate eigenspaces. To do so, under suitable assumptions, we build a microlocal projection $\Pi^w$ that commutes microlocally with~$H^w$. This leads to consider an effective operator, formally given by $\Pi^wH^w\Pi^w$, attached to the spectral window we are interested in. The projection is constructed by means of the spectral elements of the operator-valued symbol $H$ and their quantization. In the present article, we establish a generalized version of this method (to operator-valued symbols in a non-necessarily selfadjoint context), and we apply it to spectral theory. In particular, we recover (and even improve), by elementary means, recent results about the semiclassical magnetic Laplacian in two dimensions (see \cite{Morin_Raymond_VuNgoc,Fahs_LeTreust_Raymond}). It turns out that this method is an alternative to the famous Grushin method (see the review article~\cite{SZ07} and the concise presentation in \cite{Martinez07}), which has recently shown its power in the semiclassical spectral analysis of magnetic Laplacians (see the Ph.D. thesis~\cite{keraval} and, for instance, \cite{BHR22, AAHR23}). The strategy that we propose in the present article simplifies a little the Grushin method in the sense that the latter relates the bijectivity of $H^w-z$ to that of an effective operator $a^w_{\mathrm{eff},z}$ that implicitly depends on $z$, whereas the microlocal projection method directly reduces to an effective operator independent of $z$. Hopefully, this article will inspire a new view point on the spectral analysis of magnetic Laplacians and suggest further applications. 

\subsection{Superadiabatic projectors} \label{sec:mathana}  
We consider two Hilbert spaces $\mathscr{A}$ and $\mathscr{B}$, with continuous embedding $\mathscr{A}\subset\mathscr{B}$, and we denote by $h>0$ a small semiclassical parameter. We consider the symbol class
\begin{multline*}
    S(\R^{2d},\mathscr{L}(\mathscr{A},\mathscr{B}))=\{H\in C^\infty(\R^{2d},\mathscr{L}(\mathscr{A},\mathscr{B}))\,, \forall \alpha\in\mathbb{N}^{2d}\,, \exists C>0 : \\
    \|\partial^\alpha H\|_{\mathscr{L}(\mathscr{A},\mathscr{B})}\leq C_\alpha \}\,.  \end{multline*}
The operator-valued symbols can depend on $h$, as soon as the estimates defining the symbol class hold uniformly for $h$ small. Of course, contrary to the scalar case, this class of symbol is not an algebra. If $H\in  S(\R^{2d},\mathscr{L}(\mathscr{A},\mathscr{B}))$, the semiclassical Weyl quantization of $H$ is given by
\[
    H^w u(x) = \frac{1}{(2\pi h)^n}\iint_{\R^{2d}} e^{i\frac{(x-y)\cdot \xi}{h}}H\left(\frac{x+y}{2},\xi\right)u(y)\,\mathrm{d}y\mathrm{d}\xi\,,
\]
for $u\in\mathscr{S}(\R^d,\mathscr{A})$.
For details about pseudodifferential operators with operator-valued symbols, one refers to \cite[Chapitre 2]{keraval}. The Moyal product induced by the composition of such operators (when it is well-defined) is denoted by $\circledast$.

In what follows, we consider a symbol $H\in S(\R^{2d},\mathscr{L}(\mathscr{A},\mathscr{B}))$ which admits the following asymptotic expansion
\[
    H \sim \sum_{j\in\N} h^j H_j.
\]
We call admissible symbol any element of $S(\R^{2d},\mathscr{L}(\mathscr{A},\mathscr{B}))$ having a similar asymptotic expansion.

\begin{assumption}\label{eq.assumption}
We work in the following framework.
\begin{enumerate}[\rm (i)]
    \item For all $(x,\xi)\in\R^{2d}$, the principal symbol $H_0(x,\xi)$ is a selfadjoint operator on the Hilbert space $\mathscr{B}$ with domain $\mathscr{A}$ and we denote by $\sigma(x,\xi) \subset \R$ its spectrum.
    \item There exists $\delta>0$ such that the following holds: for all $(x,\xi)\in\R^{2d}$, there exists a subset $\sigma_0(x,\xi)\subset \sigma(x,\xi)$ of the spectrum of $H_0(x,\xi)$ such that
\begin{equation}
    \label{eq:spectrumisolated}
    {\rm dist}(\sigma_{0}(x,\xi),\sigma(x,\xi)\setminus \sigma_0(x,\xi)) > \delta,
\end{equation}
and such that the corresponding spectral projector
\begin{equation}
    \label{eq:definitionPi0}
    \Pi_0(x,\xi) \coloneqq \mathbf{1}_{\sigma_0(x,\xi)}\left(H_0(x,\xi)\right)
\end{equation}
defines a symbol in $S(\R^{2d},\mathscr{L}(\mathscr{B},\mathscr{A}))$.
    \item There exist $m,M\in\R$ such that, for all $(x,\xi)\in\R^{2d}$, $\sigma_0(x,\xi)\subset [m,M]$.
\end{enumerate}
\end{assumption}
Note that $\Pi_0(x,\xi)$ is bounded and thus define an operator of $\mathscr{L}(\mathscr{A})$. As such, it is an element of $\mathscr{L}(\mathscr{A}) \cap \mathscr{L}(\mathscr{B})$.
\smallskip

The first main result in this article is the following theorem.
\begin{theorem}\label{thm:main}
Let $H \in S(\R^{2d},\mathscr{L}(\mathscr{A},\mathscr{B}))$ be an admissible operator-valued symbol and $H^w$ its Weyl quantization. We assume that Assumption~\ref{eq.assumption} holds. Then, there exists an admissible operator-valued symbol $\Pi\in S(\R^{2d},\mathscr{L}(\mathscr{B},\mathscr{A}))$ with principal symbol $\Pi_0$ and unique up to a remainder of order $\mathscr{O}(h^\infty)$, satisfying
\[
\Pi^w\Pi^w=\Pi^w+\mathscr{O}(h^\infty),
\]
where the remainder $\mathscr{O}(h^\infty)$ is a pseudodifferential operator whose symbol is in the same class as $\Pi$, and 
\[
[H^w,\Pi^w]=\mathscr{O}(h^\infty)\,,
\]
where the remainder $\mathscr{O}(h^\infty)$ is a pseudodifferential operator whose symbol is in the class as $H$. Moreover, if $H$ is selfadjoint then $\Pi$ is selfadjoint as well.
\end{theorem}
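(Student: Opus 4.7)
The plan is to build $\Pi$ as a formal series $\Pi \sim \sum_{j\geq 0} h^j \Pi_j$ starting from $\Pi_0$ given by~\eqref{eq:definitionPi0}, and then to apply a Borel summation. At each order I enforce the two constraints $\Pi \circledast \Pi = \Pi$ and $[H, \Pi]_\circledast = 0$ modulo higher powers of $h$. Expanding the Moyal products and matching powers of $h$, one obtains, for each $n \geq 1$, equations of the form
\[
\Pi_0 \Pi_n + \Pi_n \Pi_0 - \Pi_n = -B_n, \qquad [H_0, \Pi_n] = -C_n,
\]
where $B_n, C_n$ are explicit expressions in $\Pi_0, \dots, \Pi_{n-1}$, $H_0, \dots, H_n$ and finitely many of their derivatives. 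At order $0$ both conditions hold automatically, since $\Pi_0$ is the spectral projector of $H_0$ on $\sigma_0$.

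The next step is the block decomposition with respect to $P = \Pi_0$ and $Q = \mathrm{Id} - \Pi_0$: any operator $A$ is written $A = A^D + A^{OD}$ with $A^D = PAP + QAQ$ and $A^{OD} = PAQ + QAP$. A direct algebraic manipulation shows that the projector equation determines the diagonal part $\Pi_n^D$ (via $P\Pi_n P = -PB_n P$ and $Q\Pi_n Q = QB_n Q$), while the commutation equation determines the off-diagonal part $\Pi_n^{OD}$. For the latter, writing $H_0 = PH_0 P \oplus QH_0 Q$ (using $[H_0, P] = 0$) and exploiting the spectral gap~\eqref{eq:spectrumisolated}, the Sylvester-type system $(PH_0P)(P\Pi_n Q) - (P\Pi_n Q)(QH_0Q) = -PC_n Q$, together with its $Q$-$P$ counterpart, admits a unique off-diagonal solution, which one can write compactly as the Riesz integral
\[
\Pi_n^{OD} = \frac{1}{2\pi i} \oint_\gamma (z - H_0)^{-1} C_n^{OD} (z - H_0)^{-1} \, \dd z,
\]
where $\gamma$ is a contour enclosing $\sigma_0(x,\xi)$ but not $\sigma(x,\xi) \setminus \sigma_0(x,\xi)$.

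For the procedure to close, certain compatibility conditions on $B_n$ and $C_n$ (block-diagonality of $B_n$ and a consistency identity between $C_n^D$ and $[H_0, B_n^D]$) must be satisfied. I would prove these by induction on $n$: they follow from the associativity of $\circledast$ applied to $\Pi^{(n-1)} \circledast \Pi^{(n-1)} \circledast \Pi^{(n-1)}$ and the Jacobi identity applied to $(H, \Pi^{(n-1)}, \Pi^{(n-1)})$, combined with the induction hypothesis that the two constraints hold to order $n-1$. Translating these algebraic cocycles into compatibility of the obstructions at each order is the main technical obstacle, since the noncommutativity of operator-valued symbols makes the bookkeeping substantially more delicate than in the scalar case.

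Once the recursion closes, smoothness of each $\Pi_n$ and its membership in $S(\R^{2d}, \mathscr{L}(\mathscr{B}, \mathscr{A}))$ follow from the Riesz integral formula and the smooth dependence of the resolvent of $H_0$ on $(x,\xi)$, uniformly in $z \in \gamma$. A Borel summation in the Fréchet topology of the symbol class then yields an admissible symbol $\Pi \sim \sum_j h^j \Pi_j$ with the required asymptotic expansion, so that $\Pi \circledast \Pi - \Pi$ and $[H, \Pi]_\circledast$ vanish to all orders in $h$; their Weyl quantizations are therefore $\mathscr{O}(h^\infty)$. Uniqueness modulo $\mathscr{O}(h^\infty)$ follows at once from the recursion, since any two admissible solutions with principal symbol $\Pi_0$ must agree term by term. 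Finally, if $H = H^*$, then $\Pi^*$ satisfies the same conditions with the same selfadjoint principal symbol $\Pi_0$, hence $\Pi - \Pi^* = \mathscr{O}(h^\infty)$ by uniqueness, and $\tfrac{1}{2}(\Pi + \Pi^*)$ is a selfadjoint representative.
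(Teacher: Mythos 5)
Your proposal follows essentially the same route as the paper: build $\Pi$ as a formal series by induction, split each $\Pi_n$ into diagonal and off-diagonal blocks with respect to $\Pi_0$ and $\Pi_0^\perp$, read off the diagonal from the idempotency constraint, solve for the off-diagonal via a Sylvester problem using the spectral gap, check the obstruction identities by algebraic manipulation of the partial Moyal sums, and finish with Borel summation. The uniqueness and selfadjointness arguments you give are also the ones used in the paper.

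One technical slip worth flagging: the single-contour Riesz formula you wrote for the whole off-diagonal part does not close. With $P=\Pi_0$, $Q=\Pi_0^\perp$, and $\gamma$ encircling only $\sigma_0$, a direct computation gives
\[
\left[H_0,\ \frac{1}{2\pi i}\oint_\gamma (z-H_0)^{-1} C_n^{OD}(z-H_0)^{-1}\,\dd z\right] \;=\; P\,C_n\,Q \;-\; Q\,C_n\,P,
\]
whereas the recursion requires $[H_0,\Pi_n^{OD}]=-P\,C_n\,Q-Q\,C_n\,P$. No overall sign choice repairs both blocks simultaneously, because the two Sylvester equations $(PH_0P)X-X(QH_0Q)=\cdot$ and $(QH_0Q)X-X(PH_0P)=\cdot$ have opposite "small minus large" structure. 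The paper handles this by solving the $PQ$ block via the Sylvester contour formula and obtaining the $QP$ block as the adjoint of a second, companion Sylvester problem; you need to do the same (or equivalently use two different contours). Finally, the compatibility identities you attribute to associativity of $\circledast$ and a Leibniz/Jacobi argument are indeed what is going on, but in the operator-valued setting the bookkeeping is nontrivial; the paper writes these out explicitly by multiplying the defining relations of $R_{k+1}$ and $T_{k+1}$ on both sides by partial sums $\Pi_{[k]}^w$, and this step should not be left as a remark if you want a complete proof.
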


Theorem \ref{thm:main} is part of a rather long story, which originates from the famous Born-Oppenheimer approximation for molecules \cite{BO27}. On the mathematical side, this approximation method is often named adiabatic theory. In the early eighties, the adiabatic reduction for systems of Schr\"odinger equations has been performed by Nenciu in the context of the time-dependent Born-Oppenheimer approximation, see \cite{N1,N2}. The reader might want to consider the monographs \cite{Teufel} and \cite{Martinez_Sordoni}, which give detailed accounts of more recent approaches. Many insights and fundamental ideas are also described in \cite{SpohnTeufel,Martinez_Sordoni_CRAS,Robert21}. The idea of constructing superadiabatic projectors, that is almost projectors as quantization $\Pi^w$ of matrix-valued symbols, originates from the seminal work \cite{emwe} and the detailed construction in \cite{bily}. This strategy to study elliptic systems has also been used in \cite{Bolte_Glaiser,Bolte_Keppeler} and even more recently in \cite{Capoferri_Vassiliev_1,Capoferri_Vassiliev_2} in the case of matrix-valued symbols to obtain spectral results. Theorem \ref{thm:main} can be seen as a generalization of the above mentioned results to a more general class of (non necessarily selfadjoint) operator-valued symbols. As we will see, our proof will inspire an elegant way to derive an effective operator -- the adiabatic Hamiltonian -- having remarkable applications motivated by spectral theory (especially of semiclassical magnetic Laplacians), see Section \ref{sec.intro-applications} below.
\smallskip 

When $\sigma_0(x,\xi)$ is made of disjoint parts, the corresponding superadiabatic projectors are orthogonal to each other.
\begin{proposition}
	\label{prop:orthogonalprojadiab}
	Let $\Pi^1$ and $\Pi^2$ be the symbols of superadiabatic projectors given by Theorem~\ref{thm:main} and corresponding to disjoint parts of the spectrum of $H_0$ that satisfy Assumption \ref{eq:spectrumisolated}, denoted respectively by 
	$\sigma_0^1(x,\xi)$ and $\sigma_0^2(x,\xi)$ for $(x,\xi)\in\R^{2d}$.
	Then, $(\Pi^1)^w$ and $(\Pi^2)^w$ are orthogonal up to $\mathscr{O}(h^\infty)$ in the sense that we have 
	\begin{equation*}
		(\Pi^1)^w (\Pi^2)^w = \mathscr{O}(h^\infty).
	\end{equation*}
\end{proposition}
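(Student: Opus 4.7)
My plan is to show that the symbol $R := \Pi^1 \circledast \Pi^2$ vanishes to all orders in $h$, by induction on the order of its leading term. The induction is driven by three structural identities for $R$, which I would establish first:
\begin{equation*}
\Pi^1 \circledast R = R + \mathscr{O}(h^\infty), \qquad R \circledast \Pi^2 = R + \mathscr{O}(h^\infty), \qquad H\circledast R - R\circledast H = \mathscr{O}(h^\infty).
\end{equation*}
The first two come directly from the almost-projector identities $\Pi^j \circledast \Pi^j = \Pi^j + \mathscr{O}(h^\infty)$ of Theorem~\ref{thm:main} by $\circledast$-composing with $\Pi^2$ on the right, respectively with $\Pi^1$ on the left; the third is obtained by applying the Leibniz rule for $\circledast$ to $H\circledast \Pi^j - \Pi^j \circledast H = \mathscr{O}(h^\infty)$, $j=1,2$.

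The base case is immediate: the principal symbol of $R$ is $\Pi_0^1 \Pi_0^2$, which vanishes pointwise since $\Pi_0^1(x,\xi)$ and $\Pi_0^2(x,\xi)$ are spectral projectors of the selfadjoint $H_0(x,\xi)$ associated with disjoint spectral subsets. For the inductive step, assume $R = h^N r_N + \mathscr{O}(h^{N+1})$ and extract the coefficients of $h^N$ from the three identities above; this forces
\begin{equation*}
\Pi_0^1 r_N = r_N = r_N \Pi_0^2, \qquad [H_0, r_N] = 0.
\end{equation*}
Thus $r_N(x,\xi)$ factors as a map from $\mathrm{ran}\,\Pi_0^2(x,\xi)$ to $\mathrm{ran}\,\Pi_0^1(x,\xi)$ intertwining the selfadjoint restrictions $H_0(x,\xi)|_{\mathrm{ran}\,\Pi_0^j(x,\xi)}$, whose spectra are $\sigma_0^1(x,\xi)$ and $\sigma_0^2(x,\xi)$ respectively.

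The induction is then closed by a pointwise Sylvester-type argument. Since both $\sigma_0^1$ and $\sigma_0^2$ satisfy \eqref{eq:spectrumisolated} with positive gaps $\delta_1,\delta_2$, and $\sigma_0^1\subset\sigma\setminus\sigma_0^2$, one has $\mathrm{dist}(\sigma_0^1(x,\xi),\sigma_0^2(x,\xi))\geq\max(\delta_1,\delta_2)$ uniformly in $(x,\xi)$. Choosing a continuous compactly supported $f\colon\R\to\R$ with $f\equiv 1$ on a neighborhood of $\sigma_0^1$ and $f\equiv 0$ on a neighborhood of $\sigma_0^2$, the continuous functional calculus yields
\begin{equation*}
r_N(x,\xi) = f\bigl(H_0|_{\mathrm{ran}\,\Pi_0^1}\bigr)\, r_N(x,\xi) = r_N(x,\xi)\, f\bigl(H_0|_{\mathrm{ran}\,\Pi_0^2}\bigr) = 0,
\end{equation*}
so $r_N\equiv 0$ as a symbol, hence $R = \mathscr{O}(h^{N+1})$. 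Iterating gives $R = \mathscr{O}(h^\infty)$ at the symbol level, and standard pseudodifferential composition then transports the estimate to the operator bound claimed in the proposition. The main subtlety I expect is ensuring that the pointwise Sylvester conclusion is accompanied by uniform seminorm estimates on the next-order remainder, but this is automatic from the uniform gap and the uniform symbol bounds on $H_0$ and the $\Pi_0^j$ supplied by Assumption~\ref{eq.assumption}.
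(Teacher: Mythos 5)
Your proof is correct, and it takes a genuinely different and cleaner route than the paper's. The paper works with the truncations $\Pi^j_{[k]}=\Pi^j_0+\cdots+h^k\Pi^j_k$ and sets $\Pi^1_{[k]}\circledast\Pi^2_{[k]}=h^{k+1}X_{k+1}$; proving the inductive step then amounts to the compatibility relation $\Pi_0^1\Pi_{k+1}^2+\Pi_{k+1}^1\Pi_0^2+X_{k+1,0}=0$, which the authors verify block by block ($\Pi_0^{1,\perp}\cdot\Pi_0^{2,\perp}$, $\Pi_0^{1,\perp}\cdot\Pi_0^2$, etc.) by going back into the recursion that built $\Pi^j_{k+1}$: the structural equations for $R^j_{k+1,0}$ and $T^j_{k+1,0}$, the explicit diagonal formula $\Pi_0^{j,\perp}\Pi^j_{k+1}\Pi_0^{j,\perp}=\Pi_0^{j,\perp}R^j_{k+1,0}\Pi_0^{j,\perp}$, and uniqueness of the Sylvester solution for the remaining $\Pi_0^1\cdot\Pi_0^2$ block. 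You instead manipulate the full formal symbols, package the three structural properties $\Pi^1\circledast R=R$, $R\circledast\Pi^2=R$, $[H,R]_\circledast=0$ (mod $\mathscr{O}(h^\infty)$) in one shot, and observe that the leading coefficient $r_N$ of $R$ then satisfies $r_N=\Pi_0^1 r_N\Pi_0^2$ and $[H_0,r_N]=0$, i.e., a Sylvester equation $(H_0\Pi_0^1)r_N - r_N(H_0\Pi_0^2)=0$ with zero data between operators of disjoint spectra, forcing $r_N=0$. Note that $r_{k+1}=X_{k+1,0}+\Pi_0^1\Pi_{k+1}^2+\Pi_{k+1}^1\Pi_0^2$, so you are proving exactly the same identity as the paper, just without unpacking the construction of $\Pi^j_{k+1}$. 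What the paper's route buys is an argument that sits entirely inside the iterative construction (it would work even before a Borel resummation is performed); what yours buys is brevity and a more transparent reason \emph{why} the orthogonality holds, namely that $r_N$ is an $H_0$-intertwiner between spectrally disjoint blocks. Two minor cosmetic points: the gap lower bound should be a strict inequality $>\max(\delta_1,\delta_2)$ to match Assumption~\ref{eq.assumption}(ii), and the final worry about "uniform seminorm estimates on the next-order remainder" is actually vacuous — $R$ is the Moyal product of two admissible symbols, hence admissible with a well-defined asymptotic expansion, and once $r_N$ is identified as $0$ there is no residual regularity to control at that order.
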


\subsection{Towards spectral theory}\label{sec.intro-applications}
Let us now turn to the applications of Theorem~\ref{thm:main}.
When $\Pi_0$ is of rank one, we can describe the structure of the superadiabatic projector. More precisely, we will make the following assumption.
\begin{assumption}\label{assumption:existencebase}
There exists $(x,\xi)\in\R^{2d} \mapsto u_0(x,\xi)\in\mathscr{B}$ in $C^\infty(\R^{2d},\mathscr{B})$ such that 
	\begin{equation*}
		\forall (x,\xi)\in\R^{2d},\,\; \|u_0(x,\xi)\|_{\mathscr{B}}=1 \quad \mbox{and}\quad\Pi_0(x,\xi) = (u_0(x,\xi),\cdot)_{\mathscr{B}}\, u_0(x,\xi).
	\end{equation*}
\end{assumption}

Under this assumption, the set $\sigma_0(x,\xi)$, $(x,\xi)\in\R^{2d}$, is reduced to the point given by the eigenvalue corresponding to the eigenvector $u_0(x,\xi)$. Moreover, it allows for the following factorization of the superadiabatic projectors.

\begin{theorem}\label{thm:spectral}
	Under Assumptions~\ref{eq.assumption} and \ref{assumption:existencebase},
	there exist
	two symbols $\ell$ and $L$ in the class $S(\R^{2d},\mathscr{L}(\mathscr{A},\C))\cap S(\R^{2d},\mathscr{L}(\mathscr{B},\C))$
	such that 
	\begin{equation}\label{eq:ell_L}
		L^w (\ell^w)^* = {\rm Id}_{L^2(\R^d,\C)}+\mathscr{O}(h^\infty)\quad \mbox{and}\quad (\ell^w)^* L^w = \Pi^w+\mathscr{O}(h^\infty).
	\end{equation}
	Moreover, if $H$ is selfadjoint, then $L=\ell$. 
\end{theorem}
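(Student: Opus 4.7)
The plan is to build the symbols $\ell$ and $L$ as formal asymptotic series $\ell \sim \sum_{j \geq 0} h^j \ell_j$ and $L \sim \sum_{j \geq 0} h^j L_j$ by induction on the order, enforcing the two target identities at each step, and then apply Borel resummation to produce honest symbols in the required class. At principal order one sets
\[
\ell_0(x,\xi) = L_0(x,\xi) = (u_0(x,\xi),\cdot)_{\mathscr{B}},
\]
so that $L_0 \ell_0^* = \|u_0\|_{\mathscr{B}}^2 = 1$ and $\ell_0^* L_0 = u_0 \otimes u_0^* = \Pi_0$ pointwise, matching the required identities modulo $\mathscr{O}(h)$.

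Assume inductively that truncations $\ell^{(N-1)}, L^{(N-1)}$ have been constructed satisfying $L^{(N-1)} \circledast (\ell^{(N-1)})^* = 1 + h^N a_N + \mathscr{O}(h^{N+1})$ and $(\ell^{(N-1)})^* \circledast L^{(N-1)} = \Pi + h^N B_N + \mathscr{O}(h^{N+1})$. Parametrising the corrections as $\ell_N(v) = (w_N,v)_{\mathscr{B}}$ and $L_N(v) = (W_N,v)_{\mathscr{B}}$, and decomposing $w_N = \beta u_0 + w_N^\perp$, $W_N = \alpha u_0 + W_N^\perp$ along $\mathrm{span}(u_0) \oplus u_0^\perp$, the cancellation equations at order $h^N$ reduce to the scalar relation $a_N + \beta + \bar{\alpha} = 0$ together with the operator relation $B_N + u_0 \otimes W_N^* + w_N \otimes u_0^* = 0$. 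The latter is solvable precisely when $B_N$ admits a decomposition of the form $a_N \Pi_0 + u_0 \otimes z_N^* + y_N \otimes u_0^*$ with $y_N, z_N \perp u_0$, in which case one takes $w_N^\perp = -y_N$, $W_N^\perp = -z_N$, and the single remaining equation $\beta + \bar{\alpha} = -a_N$ fixes the diagonal parts up to a one-parameter scalar freedom (reflecting the gauge $(\ell,L) \mapsto (f\ell, f^{-1}L)$).

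The main obstacle — and the key algebraic point — is to establish this structural form of $B_N$ at every order. I would derive it by computing $(\ell^{(N-1)})^* \circledast L^{(N-1)} \circledast (\ell^{(N-1)})^* \circledast L^{(N-1)}$ in two different ways. On the one hand, associativity combined with the first inductive identity gives $\Pi + h^N B_N + h^N a_N \Pi_0 + \mathscr{O}(h^{N+1})$. On the other hand, $\Pi \circledast \Pi = \Pi + \mathscr{O}(h^\infty)$ from Theorem~\ref{thm:main} yields $\Pi + h^N(\Pi_0 B_N + B_N \Pi_0) + \mathscr{O}(h^{N+1})$. Equating the two expressions produces the compatibility relation $B_N + a_N \Pi_0 = \Pi_0 B_N + B_N \Pi_0$. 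Decomposing $B_N$ across $\Pi_0 \oplus (I - \Pi_0)$ and using the rank-one nature of $\Pi_0$, this identity forces $\Pi_0 B_N \Pi_0 = a_N \Pi_0$ and $(I - \Pi_0) B_N (I - \Pi_0) = 0$, which is exactly the required decomposition.

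Finally, when $H$ is selfadjoint, $\Pi$ is selfadjoint by Theorem~\ref{thm:main}, and one checks inductively that $B_N^* = B_N$ and $a_N \in \R$, which forces $y_N = z_N$. Choosing $W_N = w_N$ throughout and $\alpha = \beta = -a_N/2$ preserves the symmetry $L = \ell$ at every step. A Borel resummation then produces honest symbols $\ell, L \in S(\R^{2d},\mathscr{L}(\mathscr{A},\C)) \cap S(\R^{2d},\mathscr{L}(\mathscr{B},\C))$ realising the constructed asymptotic series; the operator identities \eqref{eq:ell_L} follow from the symbol identities modulo $\mathscr{O}(h^\infty)$ by Calderón--Vaillancourt-type estimates for operator-valued symbols (see \cite[Chapitre 2]{keraval}).
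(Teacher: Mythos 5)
Your proof is correct, and it takes a genuinely different route from the paper's. The paper first constructs $\ell$ and $L$ in Lemma~\ref{lem:existenceell} by imposing three auxiliary relations — $L\circledast\ell^*=1$, $L=L\circledast\Pi$ and $\ell=\ell\circledast\Pi^*$, all modulo $\mathscr{O}(h^\infty)$ — and only afterwards, at the operator level, proves $(\ell^w)^*L^w=\Pi^w+\mathscr{O}(h^\infty)$ by a bootstrap: $P^w=(\ell^w)^*L^w$ is shown to be an approximate projector satisfying $P^w\Pi^w=\Pi^w P^w=P^w+\mathscr{O}(h^\infty)$, with $P^w-\Pi^w=\mathscr{O}(h)$, so that the idempotence of the difference forces it to be $\mathscr{O}(h^\infty)$. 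You instead impose the two target identities $L\circledast\ell^*=1$ and $\ell^*\circledast L=\Pi$ directly at each order, and locate the solvability obstruction: the remainder $B_N$ must have vanishing $\Pi_0^\perp$-block and its $\Pi_0$-block must match the scalar remainder $a_N$. Your key observation is that this structure follows from computing $\ell^*\circledast L\circledast\ell^*\circledast L$ in two ways (via associativity and via $\Pi\circledast\Pi=\Pi+\mathscr{O}(h^\infty)$), yielding $B_N+a_N\Pi_0=\Pi_0B_N+B_N\Pi_0$, which forces exactly the required decomposition. The paper's route avoids computing a compatibility identity at each order by deferring the comparison with $\Pi$ to a single clean operator argument, at the cost of carrying two extra auxiliary relations through the induction; your route is more symmetric (it treats both identities on equal footing from the start) and makes the one-parameter gauge freedom $(\ell,L)\mapsto(f\ell,f^{-1}L)$ visible, but requires the double-counting argument at every step. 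Both are valid; the compatibility checks are of comparable weight, just appearing in different places.

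One small point of care for your write-up: in the compatibility computation you should use the full Borel-summed symbol $\Pi$ from Theorem~\ref{thm:main} (for which $\Pi\circledast\Pi=\Pi+\mathscr{O}(h^\infty)$ holds exactly), rather than a truncation $\Pi_{[N-1]}$ (for which there is an additional $h^N R_{N,0}$ term from~\eqref{eq:Pik}). Using the full $\Pi$ this subtlety disappears, which is what you implicitly do, and the argument closes.
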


The existence of such operators as $L^w$ and $\ell^w$ allows us to diagonalize formally the operator $H^w$. All along the paper, we will use the notation $\Pi^\perp$ for denoting the operator $\Pi^\perp=1-\Pi$ for a projector
$\Pi$. In particular, if $\Pi$ is a projector $\Pi^\perp\Pi =\Pi\Pi^\perp=0$.

\begin{corollary}\label{cor:spectrum}
We continue with the notations of Theorem~\ref{thm:main}, and assume there exists~$\lambda\in\mathbb{R}$ and~$N\in\mathbb{N}$ such that, for all $(x,\xi)\in\mathbb{R}^{2d}$,
\[
\mathrm{sp}(H_0(x,\xi)\Pi_0(x,\xi)^\perp)\subset [\lambda,+\infty)
\]
and 
\[ 
\sigma_0(x,\xi)= \mathrm{sp}(H_0(x,\xi))\cap(-\infty,\lambda)=\sigma_1(x,\xi)\sqcup\ldots\sqcup\sigma_N(x,\xi)
\]
with the $\sigma_n$ satisfying Assumptions \ref{eq.assumption} and \ref{assumption:existencebase}.
Denote for all $j\in\{1,\ldots,N\}$, with the notations of Theorem~\ref{thm:spectral}, $(\Pi^j)^w=(\ell_j^w)^*L_j^w+\mathscr O(h^\infty)$
and set
\[M^w=\begin{pmatrix}
L_1^w H^w(\ell_1^w)^*&0&\cdots&0\\
0&L_2^w H^w(\ell_2^w)^*&&\vdots\\
0&0&\cdots&L_N^wH^w(\ell_N^w)^*
\end{pmatrix}\,.\]
Assume moreover that $H^w$ and $M^w$ have discrete spectra in $(-\infty,\lambda)$.

Then, for all $\mu\in\mathrm{sp}(H^w)\cap(-\infty,\lambda)$, if $\psi$ is a corresponding eigenfunction and letting $\Psi={}^t(L_1^w\psi,\ldots,L_N^w\psi)$, we have
\[M^w\Psi=\mu\Psi+\mathscr{O}(h^\infty)\|\Psi\|\,.\]
Conversely, for all $\mu\in\mathrm{sp}(M^w)\cap(-\infty,\lambda)$, if $\Phi={}^t(\Phi_1,\cdots,\Phi_N)$ is a corresponding eigenfunction and letting $\varphi=(\ell_1^w)^*\Phi_1+\ldots+(\ell_N^w)^*\Phi_N$, we have
\[H^w\varphi=\mu\varphi+\mathscr{O}(h^\infty)\|\varphi\|\,.\]
Moreover, in the selfadjoint case, the spectrum of $H^w$ and of $M^w$ in $(-\infty,\lambda)$ coincide modulo $\mathscr{O}(h^\infty)$\footnote{We refer to \cite[Definition 1.5 \& Remark 1.6]{Fahs_LeTreust_Raymond} for a precise definition of "coinciding modulo $\mathscr{O}(h^\infty)$", which takes the multiplicity into account.}.
\end{corollary}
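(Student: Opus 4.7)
The plan is to exploit the approximate factorizations of Theorem~\ref{thm:spectral}, the commutation $[H^w,(\Pi^j)^w]=\mathscr{O}(h^\infty)$ from Theorem~\ref{thm:main}, and the pairwise almost-orthogonality $(\Pi^j)^w(\Pi^k)^w=\mathscr{O}(h^\infty)$ for $j\neq k$ from Proposition~\ref{prop:orthogonalprojadiab}. A short computation from (\ref{eq:ell_L}) yields the auxiliary identity $L_j^w(\ell_k^w)^*=\delta_{jk}\,\mathrm{Id}+\mathscr{O}(h^\infty)$, which will be used throughout.

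\textbf{First direction.} The main step is the intertwining identity
\[
L_j^w H^w=(L_j^w H^w(\ell_j^w)^*)\,L_j^w+\mathscr{O}(h^\infty),
\]
obtained by writing $\mathrm{Id}=(\ell_j^w)^*L_j^w+\bigl(1-(\Pi^j)^w\bigr)+\mathscr{O}(h^\infty)$, commuting $H^w$ past $(\Pi^j)^w$, and noticing that $L_j^w(1-(\Pi^j)^w)=\mathscr{O}(h^\infty)$. Applied to $H^w\psi=\mu\psi$, this immediately gives, component by component, $M^w\Psi=\mu\Psi+\mathscr{O}(h^\infty)\|\psi\|$. To upgrade the right-hand side to $\mathscr{O}(h^\infty)\|\Psi\|$, I would set $\mathbf{\Pi}^w:=\sum_j(\Pi^j)^w$ and invoke the gap hypothesis $\mathrm{sp}(H_0\Pi_0^\perp)\subset[\lambda,+\infty)$ together with $\mu<\lambda$: a standard parametrix (or functional calculus) argument yields $\|(1-\mathbf{\Pi}^w)\psi\|=\mathscr{O}(h^\infty)\|\psi\|$, and the auxiliary identities then give $\|\Psi\|^2=\|\psi\|^2+\mathscr{O}(h^\infty)\|\psi\|^2$.

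\textbf{Second direction.} From $L_j^wH^w(\ell_j^w)^*\Phi_j=\mu\Phi_j$, multiplying by $(\ell_j^w)^*$ on the left and using $(\ell_j^w)^*L_j^w=(\Pi^j)^w+\mathscr{O}(h^\infty)$, then commuting $H^w$ past $(\Pi^j)^w$ and using the easy identity $(\Pi^j)^w(\ell_j^w)^*=(\ell_j^w)^*+\mathscr{O}(h^\infty)$, I obtain $H^w(\ell_j^w)^*\Phi_j=\mu(\ell_j^w)^*\Phi_j+\mathscr{O}(h^\infty)\|\Phi\|$. Summing over $j$ gives $H^w\varphi=\mu\varphi+\mathscr{O}(h^\infty)\|\Phi\|$. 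The norm equivalence $\|\Phi\|\sim\|\varphi\|$ is obtained by applying $L_k^w$ to $\varphi$ and invoking $L_k^w(\ell_j^w)^*=\delta_{jk}\,\mathrm{Id}+\mathscr{O}(h^\infty)$.

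\textbf{Coincidence of spectra in the selfadjoint case.} Here $L_j=\ell_j$, so the two correspondences $\psi\mapsto\Psi$ and $\Phi\mapsto\varphi$ are mutually (approximately) inverse isometries between the low-lying spectral subspaces of $H^w$ and $M^w$. The two quasi-mode statements above, combined with a standard dimension-counting argument based on the min-max principle (in the spirit of \cite[Definition~1.5 \& Remark~1.6]{Fahs_LeTreust_Raymond}), yield that the spectra coincide in $(-\infty,\lambda)$ modulo $\mathscr{O}(h^\infty)$, multiplicities included. The main obstacle I anticipate is the ellipticity step giving $\|(1-\mathbf{\Pi}^w)\psi\|=\mathscr{O}(h^\infty)\|\psi\|$: rigorously converting the pointwise spectral gap $\mathrm{sp}(H_0\Pi_0^\perp)\subset[\lambda,+\infty)$ into an operator bound on the range of $1-\mathbf{\Pi}^w$ requires a careful parametrix construction and is where the hypothesis $\mu<\lambda$ enters decisively.
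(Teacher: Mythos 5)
Your proof is correct and takes essentially the same route as the paper: commute $H^w$ past each $(\Pi^j)^w$, factor through $(\ell_j^w)^*L_j^w$, and use Proposition~\ref{prop:orthogonalprojadiab} to obtain $L_m^w(\ell_n^w)^*=\delta_{mn}\,\mathrm{Id}+\mathscr{O}(h^\infty)$ (the off-diagonal part of which requires the orthogonality, not just~\eqref{eq:ell_L} alone). For the norm equivalence $\|\psi\|\leq C\|\Psi\|$, the paper uses the weaker but more directly available bound $\|\psi\|\leq C\|\Pi^w\psi\|$, obtained from the bijectivity of the principal symbol $(H_0-\mu)\Pi_0^\perp+\Pi_0:\mathscr{A}\to\mathscr{B}$ together with the composition and Calder\'on--Vaillancourt theorems, thereby sidestepping your stronger claim $\|(1-\Pi^w)\psi\|=\mathscr{O}(h^\infty)\|\psi\|$ and the fuller parametrix argument it would require.
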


Corollary \ref{cor:spectrum} and some of its adaptations can be useful in spectral theory, especially because the spectral analysis of $H^w$ is formally reduced to that of a family of scalar pseudodifferential operators, what is well-known in many situations (see, for instance, the classical references \cite{Rozenblum75, Sjostrand_1992}).
Indeed, by construction the $L_j^w H^w(\ell_j^w)^*$-s are scalar operators for all $j\in\{1,\cdots, N\}$.

\begin{remark}\label{rem.sub}
If $H_0$ does not depend on $x$, then $L_0$ and $\ell_0$ do not depend on $x$ and the subprincipal symbol of $L^w H^w\ell^w$ is 
\[
L_0H_1\ell^*_0+L_1 H_0\ell_0^*+L_0H_0\ell_1^*=L_0H_1\ell^*_0+\mu_0(\xi)(L_1 \ell_0^*+L_0\ell_1^*).
\]
Moreover, Theorem \ref{thm:spectral} gives in this case $L_1 \ell_0^*+L_0\ell_1^*=0$. Thus, 
\[
L^w H^w\ell^w=\mu_0(\xi)^w+h(L_0H_1\ell^*_0)^w+\mathscr{O}(h^2).
\]
\end{remark}

In the following section, we describe some applications/adaptations to the study of magnetic operators, which have seemingly not been noticed before.

\subsection{Application to electro-magnetic wells in two dimensions}
Let us revisit the article~\cite{Morin_Raymond_VuNgoc}. We consider the two-dimensional electro-magnetic Schrödinger operator 
\[
\mathscr L_h=(-ih\nabla -\mathbf{A}(q))^2 +hV(q),\;\; q=(q_1,q_2)\in\mathbb R^2,
\]
with $\mathbf{A}\in\mathcal C^\infty (\mathbb R^2,\mathbb R^2)$ and $V\in \mathcal C^\infty (\mathbb R, \mathbb R)$. We restrict ourselves to the study of the selfadjoint case (when $V$ is real-valued), and we use the same assumptions as in~\cite{Morin_Raymond_VuNgoc}.
\begin{assumption}\label{as:app_double_well} We work in the following framework.
\begin{enumerate}
    \item[(i)] The magnetic field $B(q):=\partial_{1}A_2(q)-\partial_{2}A_1(q)$ is non-vanishing and there exists $b_0>0$ such that 
    \[
    \forall q\in\mathbb R^2,\;\; B(q)\geq b_0.
    \]
    \item[(ii)] The functions $B$, $V$ and 
    \[
    q\mapsto \int_0^{q_1} \partial_2 B(s,q_2) ds,
    \]
     are all bounded scalar symbols, i.e in the set $S(\mathbb R^2,\mathbb R)$.
     \item[(iii)] The function $q\mapsto B(q)+V(q)$ admits a unique global minimum, not reached at infinity and we set 
     \[
     \mu_0:=\min (B+V).
     \]
\end{enumerate}
\end{assumption}

As a first application of the superadiabatic projectors, we get the following theorem, which is well-known when $V=0$ (see \cite{HK11,RVN15}).

\begin{theorem}\label{thm:app_magnetic_well}
Consider Assumption \ref{as:app_double_well}. Let $C>0$. There exist $h_0 > 0$ and $N\in\mathbb{N}$ such that for all $h \in (0,h_0]$, the spectrum of $\mathscr L_h$ in $\left( \mu_0h , \mu_0h + Ch^2 \right)$ is a family of discrete eigenvalues $\left( \lambda_j(h) \right)_{1 \leqslant j \leqslant N}$ satisfying
\[
\lambda_j(h) = \mu_0 h + \left( \left( 2k-1 \right) c_0 + c_1 \right)h^2 + h^{\frac{3}{2}} \sum_{k\geq 2} c_{j,k} h^{\frac{k}{2}} +\mathscr{O}(h^\infty)\,,
\]
with $(c_{j,k})_{k\in\mathbb N}\in{\mathbb R}^{\mathbb N}$, $c_1\in\R$, and $c_0>0$.
\end{theorem}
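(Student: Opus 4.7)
The plan is to use Corollary~\ref{cor:spectrum} to replace $\mathscr{L}_h$ by a scalar effective $h$-pseudodifferential operator whose principal symbol has a non-degenerate global minimum, and then to conclude by the classical semiclassical harmonic-well analysis.

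\emph{Reduction to an operator-valued symbol.} Fix the gauge $A_1\equiv 0$, so that $A_2(q_1,q_2)=\int_0^{q_1}B(s,q_2)\,\dd s$, and translate so that the minimum $q_0$ of $B+V$ lies at the origin. Assumption~\ref{as:app_double_well}(i) makes $\kappa:(y,\eta)\mapsto(x_0(y,\eta),y)$ a diffeomorphism $\R^2\to\R^2$, where $x_0(y,\eta)$ is the unique solution of $A_2(x_0,y)=\eta$. After an appropriate metaplectic/Egorov-type change of variable implementing $q_1=x_0(y,\eta)+\sqrt h\,x$ in $\mathscr{L}_h$, the operator $\mathscr{L}_h/h$ becomes the Weyl $h$-quantization in the slow phase space $(y,\eta)$ of an operator-valued symbol $H(y,\eta;h)\sim H_0+h^{1/2}H_{1/2}+hH_1+\cdots$ acting on $L^2(\R_x)$, with principal symbol
$$
H_0(y,\eta) = D_x^2 + \tilde B(y,\eta)^2\,x^2 + \tilde V(y,\eta),
$$
where $\tilde B=B\circ\kappa$ and $\tilde V=V\circ\kappa$. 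This is a harmonic oscillator whose ground eigenvalue is $\sigma_0(y,\eta)=\tilde B+\tilde V=(B+V)\circ\kappa$, isolated from the rest of $\mathrm{sp}(H_0)$ by a gap $\geq 2b_0$, uniformly in $(y,\eta)$. Assumption~\ref{eq.assumption} therefore holds with $\mathscr{B}=L^2(\R_x)$ and $\mathscr{A}$ the harmonic Sobolev space; Assumption~\ref{assumption:existencebase} holds with $u_0(y,\eta)$ the explicit smooth family of Gaussian ground states of $H_0(y,\eta)$.

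\emph{Extraction of the scalar effective operator.} Theorems~\ref{thm:main} and~\ref{thm:spectral} produce the projector symbol $\Pi$ and the factorizing symbols $\ell,L$. Corollary~\ref{cor:spectrum} applied with $N=1$ (the window $(\mu_0h,\mu_0h+Ch^2)$ meets only the ground Landau band for small $h$, thanks to the uniform gap) reduces the spectrum of $\mathscr{L}_h/h$ below $\mu_0+Ch$ to that of the scalar $h$-pseudodifferential operator $M^w:=L^wH^w(\ell^w)^*$, modulo $\O(h^\infty)$. Remark~\ref{rem.sub} and a direct calculation with the explicit Gaussian $u_0$ yield a principal symbol equal to $(B+V)\circ\kappa$ and identify the subprincipal symbol, producing the constant $c_1$ of the statement.

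\emph{Harmonic approximation on the scalar operator.} By Assumption~\ref{as:app_double_well}(iii), the principal symbol of $M^w$ attains a unique non-degenerate global minimum $\mu_0$ at $\kappa^{-1}(q_0)$. Standard semiclassical Birkhoff normal form and harmonic-well analysis for scalar $h$-pseudodifferential operators (Helffer--Sjöstrand, Charles--Vũ Ngọc) then give
$$
\lambda_j(h)/h = \mu_0 + ((2k-1)c_0+c_1)h + h^{1/2}\sum_{k\geq 2}c_{j,k}h^{k/2} + \O(h^\infty),
$$
with $c_0>0$ explicit in terms of the Hessian of $B+V$ at $q_0$ and of $B(q_0)$. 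Multiplying by $h$ yields the statement, and exponential Agmon decay (justified by the uniform spectral gap) guarantees that truncation far from $q_0$ does not affect the low spectrum modulo $\O(h^\infty)$. The main obstacle lies in the second step: the abstract Theorem~\ref{thm:main} is stated for an $h$-graded expansion while the magnetic problem naturally produces an $h^{1/2}$-graded one, so a minor extension of the framework (or a double-scale variant) is required; moreover one must carry out enough symbol algebra on $(H_0,H_{1/2},H_1,\ell_0,\ell_1,L_0,L_1)$ to identify $c_1$ explicitly, in agreement with~\cite{Morin_Raymond_VuNgoc}.
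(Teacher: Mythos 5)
Your overall strategy (reduce $\mathscr L_h/h$ to an operator-valued $h$-pseudodifferential operator with harmonic-oscillator principal symbol, apply Corollary~\ref{cor:spectrum} with $N=1$, and finish with Birkhoff normal form around the non-degenerate minimum of $B+V$) matches the paper's. However, there are two real gaps.

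First, you never explain why the $h^{3/2}$ coefficient is absent from $\lambda_j(h)$. The effective scalar symbol $M$ comes out $\sqrt h$-graded, $M\sim \mu_1 + h^{1/2}\lambda_{1/2}+h\lambda_1+\cdots$, and a nonzero $\lambda_{1/2}$ would contribute $\lambda_{1/2}(z_*)\sqrt h$ to the eigenvalues of $M^w$, hence $h^{3/2}$ to $\lambda_j(h)$ — contradicting the claimed expansion. The paper's proof spends a dedicated paragraph killing this term: from $\ell^w(\ell^w)^*=\mathrm{Id}+\mathscr O(h^\infty)$ at order $\sqrt h$ one gets $\ell_{1/2}\ell_0^*+\ell_0\ell_{1/2}^*=0$, and then the parity of $u_0$ (even in $x_1$) together with the odd parity of $H_{1/2}$ forces $(H_{1/2}u_0,u_0)_{L^2}=\mathscr O(h^\infty)$. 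Without some argument of this kind your proposal does not establish the stated form of the expansion. Related to this, your appeal to Remark~\ref{rem.sub} is out of place: that remark assumes $H_0$ is independent of the position variable $x$, which fails here ($H_0$ depends on both slow variables $(x_2,\xi_2)$). That remark is invoked by the paper only for the Robin application.

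Second, you treat the reduction to an operator-valued symbol as clean, but the symbol $m_h$ is not in the admissible class because of the $\sqrt h\,x_1$-shifts inside $\dot B,\dot V,\dot\alpha$: one must first microlocalize using \cite[Lemma 2.7]{Morin_Raymond_VuNgoc} (eigenfunctions concentrate in $|(x_1,\xi_1)|\lesssim h^{-1/2+\delta}$), insert a cutoff $\chi_\delta$, then Taylor-expand in $(x_1,\xi_1)$ to a polynomial symbol up to an $\mathscr O(h^{(J+1)/2})$ error, and compare spectra for each fixed $J$. This also explains where the $\mathscr O(h^\infty)$ error in the parity computation comes from (replacing $\chi_\delta$ by $1$). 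You flag the $\sqrt h$-grading as a ``minor extension,'' which is correct and exactly what Remark~\ref{rem:construct} supplies, but the microlocalization/Taylor-truncation step is not optional and should appear explicitly.

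Otherwise the structure — uniform gap $\geq 2b_0$ validating Assumption~\ref{eq.assumption}, Gaussian ground state validating Assumption~\ref{assumption:existencebase}, application of Corollary~\ref{cor:spectrum} and Birkhoff normal form — is the same as the paper's.
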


\subsection{Application to the Robin magnetic Laplacian}
Let us now revisit the article~\cite{Fahs_LeTreust_Raymond}, and in order to show the effectiveness of our approach, let us also improve some of its results. We consider
\[
    \mathscr L_h=(-ih\nabla -\mathbf{A})^2,
\]
on a smooth, bounded, simply connected open Euclidean domain $\Omega$ of $\mathbb R^2$ with boundary condition of Robin type:
\begin{multline*}
    {\rm Dom}(\mathscr L_h) = \{\psi\in H^1(\Omega) : (-ih\nabla - \mathbf{A})^2\psi \in  L^2(\Omega)\,,\\ -ih \mathbf{n} \cdot (-h\nabla- \mathbf{A})\psi  = \gamma h^{\frac 3
    2}\psi  \mbox{ on } \partial \Omega\}\,.
\end{multline*}
Note that the parameter $\gamma\in [0,+\infty]$ encodes the Robin condition: $\gamma= +\infty$ corresponds to Dirichlet conditions and $\gamma=0$ to Neumann ones. The vector potential~$\mathbf{A}$ is supposed to be smooth and generating a constant magnetic field of intensity $1$. We are interested in eigenvalues of the operator $\mathscr L_h$ in  windows of the form $[ha, hb]$ with $2n - 1< a < b < 2n + 1$ for some $n\in\mathbb N$.

\subsubsection{Dispersion curves}
We keep the notation of~\cite{Fahs_LeTreust_Raymond}. We denote by $\mathbb R_+$ the half line $\mathbb R_+:=\{t>0\}$ and we consider the  De Gennes operator
\[
    H[\gamma,\sigma] = -\frac{\mathrm d^2}{\mathrm d t^2} + (t-\sigma)^2,
\]
with domain 
\[
    \mathrm{Dom}\left( H[\gamma,\sigma] \right) = \left\{ u \in B^1 \left( \mathbb R_+ \right) :\; H[\gamma,\sigma]u \in L^2(\mathbb R_+), u'(0) = \gamma u(0) \right\},
\]
where $B^1 \left( \mathbb R_+ \right) = \left\{ u \in H^1 \left( \mathbb R_+ \right) :\; [t \mapsto tu(t)] \in L^2(\mathbb R_+) \right\}$. The operator $H[\gamma,\sigma]$ has compact resolvent. We denote by $\left(\mu_n(\gamma,\sigma)\right)_{n \geq 1}$ the non-decreasing sequence consisting in its  eigenvalues, and by $(u_n^{[\gamma,\sigma]})_{n\geq 1}$ the associated normalized eigenfunctions.

For all $n\in\mathbb N^*$, the behavior of the  functions $\sigma \mapsto \mu_n(\gamma,\sigma)$ is well understood. they have a unique non-degenerate minimum with critical value $\Theta^{[n-1]}(\gamma)$ in the interval $(2n-3,2n-1)$ (see \cite[Proposition 1.1]{Fahs_LeTreust_Raymond}). Moreover, if $a,b\in\mathbb N$ belong to a $h$-independent compact subset of the interval $(2n-3,2n-1)$, then for any $k\geq 1$, the set  $\mu^{-1}_k ([a, b])$ is a compact with a finite number of connected component. It is known (see \cite[Corollary 1.2]{Fahs_LeTreust_Raymond}) that 
\begin{equation}\label{def:N(a,b)}
    N(\gamma,a,b) := \sharp \{k\geq 1 : \mu_k(\gamma, \cdot)^{-1}([a, b])\not=\emptyset\} 
    =\left\{
    \begin{array}{cl}
    n     &\mbox{if}\; b\geq \Theta^{[n-1]}(\gamma),  \\
     n-1    & \mbox{otherwise}.
    \end{array}
    \right.
\end{equation}

\subsubsection{Statement}
For $L>0$, we set $\mathbb T_{2L}=\mathbb R/2L\mathbb Z$ and denote by $L^2(\mathbb T_{2L})$ the subspace of $L^2_{loc}(\mathbb R)$ consisting in the $2L$-periodic functions and  equipped with the usual $L^2$ norm on $[0, 2L]$. A slight adaptation of the analysis of the present article allows to prove the following theorem, which improves \cite[Corollary 1.10]{Fahs_LeTreust_Raymond} by giving a spectral description modulo $\mathscr{O}(h^\infty)$.

 \begin{theorem}\label{theo:Robin}
 Let $\gamma\in [0,+\infty]$ be the Robin parameter. 
Let $n\in\mathbb N$ and $a,b\in\mathbb N$ belonging to a $h$-independent compact subset of the interval  $(2n-3,2n-1)$. Let $N:=N(\gamma,a,b)$ given by~\eqref{def:N(a,b)}. We assume that, for all $k\in\{1,\ldots,N\}$, $\mu^{-1}_k(\gamma, [a,b])$ does not contain a critical point of $\mu_k(\gamma,\cdot)$.

Then, the spectrum of $\mathscr L_h$ in $[ha, hb]$ coincides with that of the operator 
$hD^w_h$ modulo $\mathscr O(h^\infty)$, where
\[
    D^w :=\begin{pmatrix}
    m_1^w  &  0    & \cdots & 0 \\
    0      & m_2^w & \cdots & 0 \\ 
    \vdots &       &       & \vdots \\
    0      &\cdots & 0    & m_N^w
    \end{pmatrix}
\]
is a bounded operator acting diagonally on ${\rm e}^{\frac i h \mathfrak f_0\cdot} L^2(\mathbb T_{2L})^N$. 
Here
\begin{equation}\label{def:f0}
\mathfrak f_0 = \frac{|\Omega|}{|\partial\Omega|}\;\;\mbox{and}\;\; 2L=|\partial\Omega|
\end{equation}
and each $m_k^w$ is a $\sqrt h$-pseudodifferential operator with symbol in $S(\mathbb R^2,\C)$ whose principal symbol depends on $\sigma$ only and coincides with $\mu_k(\gamma, \sigma)$ in a neighborhood of $\mu_k^{-1}([a,b])$.
\end{theorem}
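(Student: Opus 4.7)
The plan is to reduce the eigenvalue problem for $\mathscr{L}_h$ in $[ha,hb]$ to a one-dimensional operator-valued semiclassical problem on the boundary $\partial\Omega$, and then to apply Corollary~\ref{cor:spectrum} to the family of dispersion curves $(\mu_k(\gamma,\cdot))_{1\leq k\leq N}$. First I would show that any eigenfunction associated with an eigenvalue of $\mathscr{L}_h$ in $[ha,hb]$ is concentrated in an $\mathscr{O}(\sqrt h)$-tubular neighborhood of $\partial\Omega$: standard magnetic Agmon estimates apply, since $|B|=1$ forces the bulk to see an effective barrier of size $h$, whereas $a,b<2n-1$ is below the second Landau level, so any interior mass is $\mathscr{O}(h^\infty)$. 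One may then work in tubular coordinates $(s,t)\in\mathbb{T}_{2L}\times[0,+\infty)$ modulo $\mathscr{O}(h^\infty)$.

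Second, I would fix a convenient gauge for $\mathbf{A}$ and perform an Aharonov--Bohm phase shift $e^{-i\Phi(s)/h}$ that centers the tangential momentum around the mean flux per unit arclength, namely $\mathfrak{f}_0=|\Omega|/|\partial\Omega|$ (the total flux through $\Omega$ is $|\Omega|$ and $|\partial\Omega|=2L$). The twist $e^{i\mathfrak{f}_0 s/h}$ appearing in the target space of the theorem is precisely the inverse of this shift. Rescaling $t=\sqrt h\tau$ then transforms $\mathscr{L}_h$ into $h$ times the $\sqrt h$-Weyl quantization of an admissible operator-valued symbol $\widehat H$ on $T^*\mathbb{T}_{2L}$ taking values in closed Robin-type operators on $L^2(\mathbb{R}_+)$; its principal symbol is by construction the De Gennes operator $H[\gamma,\sigma]$, and the higher-order terms in $\sqrt h$ are generated by the curvature of $\partial\Omega$.

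Third, I would apply the superadiabatic construction. By the assumption that $\mu_k^{-1}([a,b])$ contains no critical point of $\mu_k(\gamma,\cdot)$, each $\mu_k(\gamma,\sigma)$ is simple and isolated from the rest of $\mathrm{sp}(H[\gamma,\sigma])$ on a neighborhood of $\mu_k^{-1}([a,b])$, and the associated normalized eigenvector $u_k^{[\gamma,\sigma]}$ depends smoothly on $\sigma$, thereby fulfilling Assumption~\ref{assumption:existencebase}. After truncating each $\mu_k$ outside a neighborhood of $\mu_k^{-1}([a,b])$ so that Assumption~\ref{eq.assumption} is valid globally without disturbing the spectrum in the window, Corollary~\ref{cor:spectrum} produces scalar $\sqrt h$-pseudodifferential operators $m_k^w=L_k^w\widehat H^w(\ell_k^w)^*$ on $L^2(\mathbb{T}_{2L})$ whose principal symbols coincide with $\mu_k(\gamma,\sigma)$ on a neighborhood of $\mu_k^{-1}([a,b])$, and identifies the spectrum of $\widehat H^w$ in the window, modulo $\mathscr{O}(h^\infty)$, with that of $D^w=\mathrm{diag}(m_k^w)_{1\leq k\leq N}$. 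Undoing the gauge and multiplying by $h$ gives the statement.

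The main technical obstacle is the embedding of the original unbounded problem on $\Omega$ into the admissible-symbol framework: one has to construct cutoffs making $\widehat H$ a bona fide admissible operator-valued symbol on $T^*\mathbb{T}_{2L}$ uniformly in $h$, check that these cutoffs do not affect the spectrum in $[ha,hb]$ beyond $\mathscr{O}(h^\infty)$, and use quasimode and spectral-projector arguments in the spirit of \cite{Fahs_LeTreust_Raymond} to upgrade the quasimode estimate provided by Corollary~\ref{cor:spectrum} to a full spectral equivalence, multiplicities included.
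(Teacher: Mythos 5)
Your overall reduction to a boundary operator-valued problem and the appeal to the superadiabatic machinery are on the right track, and they match the paper's strategy up to the application of Corollary~\ref{cor:spectrum}. But there is a genuine gap at the crucial last step, and your reading of the no-critical-point hypothesis is off.

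First, you invoke the assumption that $\mu_k^{-1}(\gamma,[a,b])$ contains no critical point of $\mu_k(\gamma,\cdot)$ to argue that $\mu_k(\gamma,\sigma)$ is ``simple and isolated'' from the rest of the spectrum of $H[\gamma,\sigma]$. That is not what the hypothesis is for: simplicity of the De Gennes eigenvalues and the spectral gap needed for Assumption~\ref{eq.assumption} hold for every $\sigma$, regardless of critical points. The critical-point hypothesis plays a completely different role, to which I return below.

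Second, and more seriously, you write that Corollary~\ref{cor:spectrum} ``identifies the spectrum of $\widehat H^w$ in the window, modulo $\mathscr{O}(h^\infty)$, with that of $D^w$.'' The spectral-identification clause of Corollary~\ref{cor:spectrum} is proved only in the selfadjoint case, and the paper is explicit that here it does not apply: after the cutoffs and the Taylor expansion, the operator $\mathfrak N_h^J$ is not selfadjoint on the canonical $L^2$-space, and the effective operator $M^w=\mathrm{diag}\bigl(L_k^w\widehat H^w(\ell_k^w)^*\bigr)$ is a priori only selfadjoint at the level of its principal symbol. Corollary~\ref{cor:spectrum} then yields two-sided quasimode estimates, but quasimodes for a non-normal operator do not control the distance to its spectrum. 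This is precisely where the no-critical-point hypothesis intervenes: it allows, via \cite[Proposition~1.9]{Fahs_LeTreust_Raymond} and the microlocal normal form of \cite{Rozenblum75}, to construct a diagonal pseudodifferential operator $A^w$ with real principal symbol such that $D^w=e^{iA^w}M^we^{-iA^w}$ has a symbol depending only on $\sigma$. Such a $D^w$ is a normal operator, hence its resolvent is bounded by the reciprocal of the distance to the spectrum, and only then can the quasimode estimate $(D^w-\mu)e^{iA^w}\tilde\Psi=\mathscr{O}(h^{(J+1)/2})\|e^{iA^w}\tilde\Psi\|$ be promoted to spectral proximity. Your closing sentence acknowledging the need to ``upgrade the quasimode estimate'' gestures at the issue but does not name or supply this mechanism, which is the essential content of this application. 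You would need to add the conjugation by $e^{iA^w}$, the normality of $D^w$, and the converse direction (transporting an eigenfunction of $D^w$ back through $e^{-iA^w}$, through the quasimode construction of Corollary~\ref{cor:spectrum}, and through the tubular-coordinate localization, relying on the selfadjointness of the original $\mathscr{L}_h$ for the final step).
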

\begin{remark}
The proof of Theorem \ref{theo:Robin}, combined with Remark \ref{rem.sub}, allows to recover the main result in \cite{Fahs_LeTreust_Raymond}, namely Theorem 1.7 which is a description of the spectrum modulo $\mathscr{O}(h^2)$ since the operators $m^w_j$ are selfadjoint modulo $\mathscr{O}(h^2)$.
\end{remark}

\subsection{Overview of the article} Section~\ref{sec:superadiabatic} is devoted to the proof of Theorem~\ref{thm:main}
(Section~\ref{sec:proof_main}), to the analysis of various properties of the superadiabatic projectors (Section~\ref{sec:properties}), to the proofs of Theorem~\ref{thm:spectral} and Corollary~\ref{cor:spectrum} (Section~\ref{sec:specandquasi}), and to a comparison of the superadiabatic projectors with the spectral ones when $H^w$ is  self-adjoint and bounded from below (Section~\ref{sec:sabb}). The two applications are detailed in Section~\ref{sec:application} and the Appendix~\ref{sec:sylvester} is devoted to the  Sylvester problem, which is a key argument in the proof of Theorem~\ref{thm:main}.

\subsection*{Acknowledgements} 
The authors thank Caroline Lasser and Fabricio Macià who indicated them the literature about the Sylvester problem, and for so many discussions concerning superadiabatic projectors.
The authors acknowledge the support  of  the Région Pays de la Loire via the Connect Talent Project HiFrAn 2022 07750, and  from the France 2030 program, Centre Henri Lebesgue ANR-11-LABX-0020-01.

\section{Superadiabatic projectors}\label{sec:superadiabatic}

In this section, we revisit the construction of superadiabatic projectors. Comparatively to existing results, we relax the assumptions on the subprincipal symbol of the original Hamiltonian since only $H_0$ and $\Pi_0$ are supposed to be selfadjoint. 

\subsection{Construction of  superadiabatic projectors}\label{sec:proof_main}

\subsubsection{Proof of Theorem~\ref{thm:main}}
One constructs, by induction, a symbol $\displaystyle{\Pi \sim \sum_{k\geq 0} h^k \Pi_k}$ satisfying the  relations
\begin{equation*}
%\Pi^* = \Pi,\quad 
\Pi \circledast \Pi = \Pi + \cO(h^\infty) \quad \mbox{and}\quad H \circledast \Pi - \Pi \circledast H = \cO(h^\infty),
\end{equation*}
where $\circledast$ denotes the Moyal product and the remainders $\cO(h^\infty)$ are taken in the sense of formal series in $h$ 
with coefficients in $C^\infty(\R^{2d},\mathscr{L}(\mathscr{A}))\cap C^\infty(\R^{2d},\mathscr{L}(\mathscr{B}))$. More precisely, our aim is to construct formally
the sequence of symbols $(\Pi_k)_{k\in\mathbb N}$ such that we have for all $k\in\N$
\begin{align}\label{eq:Pik}
	&\Pi^w_{[k]}\Pi^w_{[k]}=\Pi_{[k]}^w+h^{k+1}R_{k+1}^w\,,\\
	\label{eq:Hk}
	&[H^w_{[k]},\Pi^w_{[k]}]=h^{k+1}T^w_{k+1},
\end{align}
for some symbols $R_{k+1}=\sum_{j\geq 0}h^j R_{k+1,j}$ and $T_{k+1}=\sum_{j\geq 0}h^j T_{k+1,j}$, and where
\begin{align*}
&\Pi_{[k]} \coloneqq \Pi_0+h\Pi_1+\ldots+h^{k}\Pi_k\,,\\
&H_{[k]} \coloneqq H_0+hH_1+\cdots +h^kH_k.
\end{align*}
%The construction will imply that for all $k\in\N$, the formal symbol $\Pi_k$ is in the desired symbol class and we will conclude by a Borel's summation procedure.

When $k=0$,  the symbol $\Pi_0$ readily satisfies the relations~\eqref{eq:Pik} and~\eqref{eq:Hk} by the composition theorem for pseudodifferential operators with operator-valued symbols. 
Let us now assume that we have constructed $\Pi_0,\Pi_1,\ldots,\Pi_k$ in such a way that we have~\eqref{eq:Pik} and~\eqref{eq:Hk}.
We are looking for $\Pi_{k+1}$ such that
\begin{equation}\label{eq:condition_k_0}
\Pi_{0}\Pi_{k+1}+\Pi_{k+1}\Pi_{0}=\Pi_{k+1}-R_{k+1,0}
%\footnote{Lino: je ne suis pas d'accord, je pense qu'il faut un moins devant le R}
\;\;\mbox{
and }\;\;
[H_0,\Pi_{k+1}]
+[H_{k+1},\Pi_0]
=-T_{k+1,0}
%\footnote{Lino: Pareil, je dirais qu'il faut un moins devant le T}
\,,
\end{equation}
or equivalently, satisfying
\begin{equation}\label{eq:condition_k}
	\Pi_{0}\Pi_{k+1}-\Pi_{k+1}\Pi_{0}^\perp=-R_{k+1,0} \;\;\mbox{
and }\;\;
[H_0,\Pi_{k+1}]=-S_{k+1,0}\,,
\end{equation}
where $S_{k+1,0}= T_{k+1,0}+ [H_{k+1},\Pi_0]$.

\smallskip 

 The first relation in~\eqref{eq:condition_k} determines the diagonal part of $\Pi_{k+1}$:
\begin{equation}\label{eq:Pi_k+1_diag}
\Pi_0 \Pi_{k+1}\Pi_0=- \Pi_0R_{k+1,0}\Pi_0\;\;\mbox{and}\;\;
\Pi_0^\perp \Pi_{k+1}\Pi_0^\perp= \Pi_0^\perp R_{k+1,0}\Pi_0^\perp.
\end{equation}
It has a solution if and only if one has the first compatibility relation 
\begin{equation}\label{comp_1}
[\Pi_0,R_{k+1,0}]=0.
\end{equation}
The second relation in~\eqref{eq:condition_k} determines the anti-diagonal part of $\Pi_{k+1}$. Indeed, it writes 
\begin{equation}\label{eq:rec_k}
[H_0,\Pi_0 \Pi_{k+1}\Pi_0^\perp]=-\Pi_0 S_{k+1,0}\Pi^\perp_0\,,\quad [H_0,\Pi^\perp_0 \Pi_{k+1}\Pi_0]=-\Pi^\perp_0 S_{k+1,0}\Pi_0\,.
\end{equation}
This system can be solved if and only if 
	\[\Pi_0[H_0,R_{k+1,0}]\Pi_0=\Pi_0 S_{k+1,0}\Pi_0\;\;\mbox{
	and}\;\;
	\Pi_0^\perp[H_0,R_{k+1,0}]\Pi_0^\perp=-\Pi_0^\perp S_{k+1,0}\Pi_0^\perp\,,\]
	or, equivalently 
\begin{equation}\label{comp_2}
\Pi_0[H_0,R_{k+1,0}]\Pi_0=\Pi_0 T_{k+1,0}\Pi_0\;\;\mbox{
	and}\;\;
	\Pi_0^\perp[H_0,R_{k+1,0}]\Pi_0^\perp=-\Pi_0^\perp T_{k+1,0}\Pi_0^\perp\,.
	\end{equation}
Postponing the verification of these conditions, we can reformulate the first equation in~\eqref{eq:rec_k}  
as follows: let  
$Y=-\Pi_0 S_{k+1,0}\Pi^\perp_0$,  we are looking for $X=\Pi_0\Pi_{k+1}\Pi_0^\perp$ such that 
\begin{equation}\label{sylv_Pi0}
(H_0\Pi_0) X - X(H_0\Pi_0^\perp) = Y.
\end{equation}
Such a  problem is solved by the  Sylvester's Theorem~\ref{thm:sylvester} (see~\cite{bhatia_rosenthal,Rosenblum_56} and Section~\ref{sec:sylvester} where a proof is given),  applied to $K_0=H_0\Pi_0$ and $K_1=H_0\Pi_0^\perp$. The assumptions of Theorem~\ref{thm:sylvester} are satisfied because of Assumption~\ref{eq.assumption}. 
The second equation in~\eqref{eq:rec_k} can be solved similarly: the operator $(\Pi_0^\perp \Pi_{k+1}\Pi_0)^*$ solves the Sylvester problem~\eqref{sylv_Pi0} with $Y=- (\Pi_0 S^*_{k+1,0}\Pi_0^\perp)$. 
\smallskip

It remains to verify that the compatibility relations~\eqref{comp_1} and~\eqref{comp_2} are satisfied.
Let us start with~\eqref{comp_1}.
We deduce from~\eqref{eq:Pik} that 
	\begin{align*}
		\Pi^w_{[k]}	R_{k+1}^w (1-\Pi^w_{[k]}	)&= h^{-k-1} \Pi^w_{[k]}	(\Pi^w_{[k]}\Pi^w_{[k]}-\Pi_{[k]}^w)(1-\Pi^w_{[k]}	)\\
		&= -h^{-k-1}(\Pi^w_{[k]}\Pi^w_{[k]}-\Pi_{[k]}^w)^2= -h^{k+1}R_{k+1}^wR_{k+1}^w.
	\end{align*}
	whence $\Pi_0R_{k+1,0} (1-\Pi_0)=0$. One argues similarly with $(1-\Pi_0)R_{k+1,0} \Pi_0$.
	
\noindent 	For proving~\eqref{comp_2},  we use~\eqref{eq:Hk} and~\eqref{eq:Pik} and write
	\begin{align*}
	\Pi^w_{[k]} T_{k+1}^w \Pi^w_{[k]}&=h^{-k-1} \Pi^w_{[k]}[H_{[k]}^w,\Pi^w_{[k]}]\Pi^w_{[k]} \\
	&=h^{-k-1}\left( \Pi^w_{[k]} H_{[k]}^w (\Pi^w_{[k]})^2 -( \Pi^w_{[k]})^2H_{[k]}^w  \Pi^w_{[k]}\right)\\
	&= \Pi^w_{[k]}H_{[k]}^w R_{k+1}^w-R_{k+1}^wH_{[k]} \Pi^w_{[k]}, 
	\end{align*}
	whence $\Pi_0H_0 R_{k+1,0} -R_{k+1,0}  H_0\Pi_0=\Pi_0 T_{k+1,0}\Pi_0$, which implies 
	\[
	\Pi_0[H_0, R_{k+1,0} ]\Pi_0=\Pi_0 T_{k+1,0}\Pi_0,
	\]
	since $[\Pi_0, R_{k+1,0}]=0$. 
	One argues similarly for the other relation, which concludes the induction argument. 

\smallskip

The smoothness of the symbols $\Pi_k$ that we have constructed, and the boundedness of their derivatives are consequences of the formula~\eqref{eq:Pi_k+1_diag} for the diagonal part of~$\Pi_{k}$, and of  Corollary~\ref{cor:smooth}~(1) for its off-diagonal part. 
Borel summation procedure, used as in \cite[Prop. 2.1.6]{keraval}, finishes the proof of Theorem~\ref{thm:main}.

\subsubsection{Remarks and comments}

The proof of Theorem~\ref{thm:main} calls for several comments and remarks. We first want to emphasize that the method extends to more general settings that one might find itself in applications, see Section~\ref{sec:application}.

\begin{remark} \label{rem:construct}
The construction of the superadiabatic projector does not depend on the special form of the Moyal product. It only uses that $H_0$ is selfadjoint and $\Pi_0$ is a projector commuting with $H_0$ for which Assumption \ref{eq.assumption} holds. As a consequence, the result
Theorem~\ref{thm:main} is also satisfied  by operators the symbols of which admit asymptotics expansions of the form  
\[
H = H_0+ \sum_{j\in\N} h^{\alpha_ j}  H_j
\]
where   $j\mapsto \alpha_j$ is a non decreasing sequence such that  $\mathbb N\subset \{\alpha_j, j\in\mathbb N\}$ and  such that for all $k,\ell\in\mathbb N$, $\alpha_k+\alpha_\ell\in  \{\alpha_j, j\in\mathbb N\}$. It is the case for example when $\alpha_j= j/{q_0}$ for some $q_0\in\N^*$.
\end{remark}
We also want to comment the boundedness assumptions of Theorem~\ref{thm:main}. 
The algebraic construction of the symbols $(\Pi_k)_{k\in\N}$  can be performed without assumptions on the boundedness of the operators $(H_k^w)_{k \in \N}$ and $(\Pi_k^w)_{k \in \N}$.

%begin{remark}\label{rem:subquad}
As observed in~\cite{Robert21,FLR_23} for example, the construction of superadiabatic projectors is still valid
%and lead to bounded symbols in $S(\R^{2d},\mathscr L(\mathscr A,\mathscr B))$
when, for example, $(x,\xi)\in\R^{2d}\mapsto H(x,\xi)$ is of subquadratic growth, i.e. satisfies
\[
\forall \alpha\in\N^{2d},\;\;|\alpha|\geq 2,\;\; \exists C_\alpha,\;\;\|  \partial^\alpha
H_0\|_{\mathscr L(\mathscr A,\mathscr B)}\leq C_\alpha. 
\]
%\end{remark}
%\begin{remark}
It may also happen that the adiabatic assumption, namely Assumption \ref{eq.assumption}, is only verified in an open subset $\Omega$ of the cotangent space. Then, the quantization of the symbols constructed via the  algebraic process could only be performed after microlocalization, as stated in the next lemma.

\begin{lemma}	\label{lem:localadiabatic}
	Let $\Omega\subset \R^{2d}$ where the adiabatic assumption (ii) in Assumption~\ref{eq.assumption} holds. Let $\Omega_1$ an open subset $\Omega$, strictly included in $\Omega$, and $\chi_1\in C_{c}^\infty(\Omega)$ such that $\chi_1=1$ on $\Omega_1$. There exist $R_1, R_2\in C^\infty(\R^{2d})$ such that $R_1=R_2=0$ on $\Omega_1$ and such that \begin{align*}
	  &  (\Pi\chi_1)^w (\Pi\chi_1)^w= (\Pi\chi_1)^w+R_1^w+\mathscr O(h^\infty)
	  \;\;\mbox{and}\;\;
	    [ H^w, (\Pi\chi_1)^w]=R_2^w+\mathscr O(h^\infty).
	\end{align*}
\end{lemma}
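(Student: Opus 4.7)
The plan is to run the induction from the proof of Theorem~\ref{thm:main} locally on $\Omega$, then absorb the resulting formal projector into the cutoff $\chi_1$. The induction of Section~\ref{sec:proof_main} is purely algebraic: at each order, it uses only the selfadjointness of $H_0$, the fact that $\Pi_0$ is a spectral projector commuting with $H_0$, and the spectral gap \eqref{eq:spectrumisolated}. All of this holds pointwise on $\Omega$. Consequently, the same induction produces a formal sequence $(\Pi_k)_{k\in\N}$ of smooth symbols on $\Omega$ (with values in $\mathscr L(\mathscr B,\mathscr A)$) such that, as formal power series on $\Omega$,
\[
\Pi\circledast\Pi=\Pi+\mathscr O(h^\infty),\qquad [H,\Pi]_\circledast=\mathscr O(h^\infty),\qquad \Pi\sim\sum_{k\ge 0}h^k\Pi_k.
\]

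The second step is to cut off. Since $\chi_1\in C^\infty_c(\Omega)$, each $\chi_1\Pi_k$ extends by zero to a function of $C^\infty(\R^{2d},\mathscr L(\mathscr B,\mathscr A))$ with bounded derivatives, hence lies in $S(\R^{2d},\mathscr L(\mathscr B,\mathscr A))$. A Borel summation (as in \cite[Prop.~2.1.6]{keraval}) yields a global symbol $\widetilde\Pi=(\chi_1\Pi)\in S(\R^{2d},\mathscr L(\mathscr B,\mathscr A))$ with asymptotic expansion $\sum_k h^k\chi_1\Pi_k$. One then forms the pseudodifferential operator $\widetilde\Pi^w=(\chi_1\Pi)^w$ and computes
\[
\widetilde\Pi^w\widetilde\Pi^w-\widetilde\Pi^w=\bigl(\widetilde\Pi\circledast\widetilde\Pi-\widetilde\Pi\bigr)^w,\qquad [H^w,\widetilde\Pi^w]=\bigl([H,\widetilde\Pi]_\circledast\bigr)^w.
\]

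The key point is the following local computation. Since $\Omega_1$ is open and $\chi_1\equiv 1$ on $\Omega_1$, all derivatives of $\chi_1$ vanish identically on $\Omega_1$; by Leibniz, $\partial^\alpha\widetilde\Pi_k(x,\xi)=\partial^\alpha\Pi_k(x,\xi)$ for every $\alpha$, every $k$, and every $(x,\xi)\in\Omega_1$. The Moyal product is, at the level of its asymptotic expansion in $h$, a sum of bilinear differential expressions evaluated pointwise. Thus on $\Omega_1$ each coefficient of $\widetilde\Pi\circledast\widetilde\Pi$ (resp.\ $[H,\widetilde\Pi]_\circledast$) coincides with the corresponding coefficient of $\Pi\circledast\Pi$ (resp.\ $[H,\Pi]_\circledast$), and the first step yields
\[
\widetilde\Pi\circledast\widetilde\Pi-\widetilde\Pi=\mathscr O(h^\infty),\qquad [H,\widetilde\Pi]_\circledast=\mathscr O(h^\infty),\qquad \text{on }\Omega_1.
\]

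It remains to convert the \emph{infinitely flat} remainders on $\Omega_1$ into symbols that vanish \emph{identically} on $\Omega_1$. Choose $\widetilde\chi\in C^\infty(\R^{2d})$ with $\widetilde\chi=0$ on $\Omega_1$ and $\widetilde\chi=1$ outside a slightly larger neighborhood of $\Omega_1$ still contained in the interior of $\{\chi_1=1\}$ (which is possible since $\Omega_1$ is open and $\chi_1$ is smooth). Define
\[
R_1:=\widetilde\chi\,\bigl(\widetilde\Pi\circledast\widetilde\Pi-\widetilde\Pi\bigr),\qquad R_2:=\widetilde\chi\,[H,\widetilde\Pi]_\circledast.
\]
Both symbols vanish on $\Omega_1$ by construction of $\widetilde\chi$; their complements $(1-\widetilde\chi)(\cdot)$ are supported where the brackets are already $\mathscr O(h^\infty)$, hence contribute only $\mathscr O(h^\infty)$. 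This yields the announced identities. The main technical subtlety, and the only one, is the distinction between ``vanishing on $\Omega_1$ up to $\mathscr O(h^\infty)$'' (automatic from the Moyal expansion) and ``vanishing identically on $\Omega_1$'' (obtained via the truncation $\widetilde\chi$); everything else is a direct transcription of the global proof of Theorem~\ref{thm:main} to the localised setting.
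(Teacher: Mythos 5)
Your first steps are correct and match the paper's intent: the induction of Theorem~\ref{thm:main} is purely algebraic and pointwise, so it runs on $\Omega$ alone; and because all derivatives of $\chi_1$ vanish on $\Omega_1$, every coefficient in the Moyal expansion of $(\Pi\chi_1)\circledast(\Pi\chi_1)-\Pi\chi_1$ and of $[H,\Pi\chi_1]_\circledast$ vanishes identically on $\Omega_1$. This is exactly the observation the paper's one-line proof isolates.

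The gap is in the last step. You define $R_1=\widetilde\chi\bigl(\widetilde\Pi\circledast\widetilde\Pi-\widetilde\Pi\bigr)$ and argue that $(1-\widetilde\chi)\bigl(\widetilde\Pi\circledast\widetilde\Pi-\widetilde\Pi\bigr)=\mathscr O(h^\infty)$ because its support lies where the bracket is $\mathscr O(h^\infty)$. For this you need a smooth $\widetilde\chi$ equal to $0$ on $\Omega_1$ and to $1$ outside a neighborhood of $\Omega_1$ still contained in $\mathrm{int}\{\chi_1=1\}$, and your parenthetical justification that this is always possible is false: with $\Omega_1=B(0,1)$ and $\chi_1$ equal to $1$ precisely on $\overline{B(0,1)}$ and to a value $<1$ elsewhere, one has $\mathrm{int}\{\chi_1=1\}=\Omega_1$, so there is no room for the intermediate neighborhood, and any smooth $\widetilde\chi$ vanishing on $\Omega_1$ has $\mathrm{supp}(1-\widetilde\chi)$ extending past $\overline{\Omega_1}$ into a region where the bracket's asymptotic expansion is not known to vanish. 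The clean fix stays within your framework: Borel-sum the remainder series itself. Each coefficient $A_k$ of the asymptotic expansion of $(\Pi\chi_1)\circledast(\Pi\chi_1)-\Pi\chi_1$ vanishes on $\Omega_1$; a Borel sum $R_1=\sum_k h^k\phi(h\lambda_k)A_k$ is, at each fixed $h$, a finite sum and therefore vanishes on $\Omega_1$ exactly, while it differs from the symbol of $(\Pi\chi_1)^w(\Pi\chi_1)^w-(\Pi\chi_1)^w$ by $\mathscr O(h^\infty)$ because both share the same asymptotic expansion. Same for $R_2$; no auxiliary cutoff is needed.
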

%\end{remark}

\begin{proof}
The result comes from the symbolic calculus, once one has noticed that for all $\alpha \in\N^{2d}$, one has $\partial^\alpha \chi_1=0$ on $\Omega_1$.
\end{proof}

%%%%%%%%%%%%%%%%%%%%%%%%%%%%%%%%%%%%%%%%%%%%%%%%%%%%%%%%%%%%%%%%%%%%%%%%%%%%%%

\subsection{Properties of the superadiabatic projectors}\label{sec:properties}

This section is devoted to various properties satisfied by the superadiabatic projectors of Theorem \ref{thm:main}.

\subsubsection{Control of the derivatives}
We place ourselves in the setting of Lemma \ref{lem:localadiabatic} and we assume that $H$ depends smoothly of the parameters~$(x,\xi)$ in an open set $\Omega\subset\R^{2d}$ with uniformly bounded derivatives, i.e is in $S(\Omega,\mathscr L(\mathscr A,\mathscr B))$. Then, under Assumption~\ref{eq.assumption},  Theorem~\ref{thm:main} generates a  superadiabatic projector $\Pi=\sum_{j\geq 0} h^j\Pi_j$ which is a smooth function on $\Omega$. 

According to Remark~\ref{rem:deriv_proj},
Assumption~\ref{eq.assumption} implies bounds on the derivatives of the projector $\Pi_0$ in terms of the gap parameter~$\delta$: 
\begin{equation}\label{eq:der_proj}
\forall \alpha\in\N^{2d},\;\; \exists C_\alpha>0,\;\;\sup_{(x,\xi)\in\Omega}\| \partial^\alpha_{x,\xi} \Pi_0(x,\xi)\|_{\mathscr{L}(\mathscr B,\mathscr A)}\leq C_\alpha \delta^{-|\alpha|}.
\end{equation}
The next result describes the growth of the derivatives of the other elements $(\Pi_j)_{j\geq 1}$ as well.

\begin{lemma}\label{lem:derivative_proj}
Under the preceding assumptions, for all $j\in\N$ and  $\alpha\in\N^{2d}$, there exists $C_{\alpha,j}>0$ such that 
\[
\forall z\in\Omega,\;\; \| \partial_z^\alpha \Pi_j(z)\|_{\mathscr L(\mathscr B,\mathscr A)}\leq C_{\alpha,j} \delta^{-|\alpha|-2j}.
\]
\end{lemma}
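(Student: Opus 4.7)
The plan is a strong induction on $j\in\N$. The base case $j=0$ is exactly \eqref{eq:der_proj}. Assume the estimates hold for $\Pi_0,\ldots,\Pi_k$; I will establish them for $\Pi_{k+1}$ by exploiting the explicit recursion of Section \ref{sec:proof_main}: recall that $\Pi_{k+1}$ is determined from $R_{k+1,0}$ through \eqref{eq:Pi_k+1_diag} (diagonal blocks) and from $S_{k+1,0}=T_{k+1,0}+[H_{k+1},\Pi_0]$ through the Sylvester equation \eqref{sylv_Pi0} (off-diagonal blocks).

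The first step is to control the derivatives of $R_{k+1,0}$ and $T_{k+1,0}$. Extracting the coefficient of $h^{k+1}$ in the Moyal expansion of $\Pi_{[k]}\circledast\Pi_{[k]}-\Pi_{[k]}$, each contribution is a linear combination, with universal coefficients, of products $\partial^{\beta_1}\Pi_a\cdot \partial^{\beta_2}\Pi_b$ where $a,b\leq k$ and $a+b+\tfrac12(|\beta_1|+|\beta_2|)=k+1$. Applying the induction hypothesis to each factor and using Leibniz to handle a further $\partial^\alpha$ yields
\[
\|\partial^\alpha R_{k+1,0}(z)\|_{\mathscr{L}(\mathscr B,\mathscr A)}\leq C_{\alpha,k+1}\,\delta^{-|\alpha|-2(k+1)}.
\]
A strictly analogous computation on $[H_{[k]},\Pi_{[k]}]_\circledast$, together with the observation that the derivatives of the symbols $H_a$ bring no negative power of $\delta$ and that the dominant Moyal-order-$1$ contribution is the Poisson bracket $\{H_0,\Pi_k\}$, yields the one-power-better bound $\|\partial^\alpha T_{k+1,0}\|\leq C'_{\alpha,k+1}\,\delta^{-|\alpha|-2(k+1)+1}$; the same estimate holds for $\partial^\alpha S_{k+1,0}$.

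The diagonal part of $\Pi_{k+1}$, defined by \eqref{eq:Pi_k+1_diag}, is directly estimated by combining \eqref{eq:der_proj} and the bound on $R_{k+1,0}$ via Leibniz, giving the required $\delta^{-|\alpha|-2(k+1)}$. For the off-diagonal part $X=\Pi_0\Pi_{k+1}\Pi_0^\perp$, the Sylvester equation \eqref{sylv_Pi0} has right-hand side $Y=-\Pi_0 S_{k+1,0}\Pi_0^\perp$, which after differentiation is controlled by $\delta^{-|\alpha|-2(k+1)+1}$. The Rosenblum representation (Theorem \ref{thm:sylvester} and Corollary \ref{cor:smooth}) gives $\|X\|\leq \delta^{-1}\|Y\|$. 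To propagate to higher derivatives, I differentiate \eqref{sylv_Pi0} and obtain, for every multi-index $\alpha$, a Sylvester equation
\[
(H_0\Pi_0)(\partial^\alpha X)-(\partial^\alpha X)(H_0\Pi_0^\perp)=\partial^\alpha Y-\sum_{0<\gamma\leq\alpha}\binom{\alpha}{\gamma}\bigl(\partial^\gamma(H_0\Pi_0)\,\partial^{\alpha-\gamma}X-\partial^{\alpha-\gamma}X\,\partial^\gamma(H_0\Pi_0^\perp)\bigr).
\]
Using $\|\partial^\gamma(H_0\Pi_0^{(\perp)})\|\leq C\delta^{-|\gamma|}$ (since $H_0$ is $\delta$-independent and $\Pi_0$ obeys \eqref{eq:der_proj}), an induction on $|\alpha|$ with one factor $\delta^{-1}$ consumed at each Sylvester inversion yields $\|\partial^\alpha X\|\leq C_{\alpha,k+1}\delta^{-|\alpha|-2(k+1)}$, and the adjoint block $\Pi_0^\perp\Pi_{k+1}\Pi_0$ is handled identically.

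The main obstacle is the bookkeeping of $\delta$-powers at the off-diagonal step, and the key cancellation is that $S_{k+1,0}$ is one $\delta$-order better than $R_{k+1,0}$, which exactly compensates the single extra $\delta^{-1}$ introduced by the Sylvester inversion; this is what makes the diagonal and off-diagonal contributions to $\Pi_{k+1}$ satisfy the same bound and closes the induction.
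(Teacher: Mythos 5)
There is a genuine gap in your treatment of the derivatives of the off-diagonal part. Your overall strategy (strong induction on $j$, bounds $\|\partial^\alpha R_{k+1,0}\|\lesssim\delta^{-|\alpha|-2(k+1)}$ and $\|\partial^\alpha S_{k+1,0}\|\lesssim\delta^{-|\alpha|-2k-1}$, diagonal part handled through \eqref{eq:Pi_k+1_diag}) coincides with the paper's. But where the paper invokes Corollary~\ref{cor:smooth}(2) — i.e., differentiates the explicit contour integral representation of the Sylvester solution — you instead differentiate the Sylvester \emph{equation} and run a nested induction on $|\alpha|$. That inner induction does not close. Concretely: with the hypothesis $\|\partial^{\alpha-\gamma}X\|\leq C\delta^{-|\alpha-\gamma|-2(k+1)}$ and the bound $\|\partial^\gamma(H_0\Pi_0)\|\leq C\delta^{-|\gamma|}$ that you yourself write, each term $\partial^\gamma(H_0\Pi_0)\,\partial^{\alpha-\gamma}X$ on the right-hand side of the differentiated Sylvester equation is $O(\delta^{-|\gamma|-|\alpha-\gamma|-2(k+1)})=O(\delta^{-|\alpha|-2(k+1)})$, which already equals the target bound for $\partial^\alpha X$; applying the Sylvester inversion then costs one further $\delta^{-1}$ and produces $O(\delta^{-|\alpha|-2(k+1)-1})$, one power worse than claimed. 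Iterating, this scheme only yields something like $\delta^{-2|\alpha|-2(k+1)}$, not $\delta^{-|\alpha|-2(k+1)}$.

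The discrepancy is real, not a bookkeeping slip: the operator-norm inversion bound $\|X\|\leq\delta^{-1}\|Y\|$ applied $|\alpha|$ times pays $\delta^{-1}$ at every step, whereas Corollary~\ref{cor:smooth}(2) differentiates the Rosenblum formula $X=-\frac{1}{2\pi i}\oint(K_0-z)^{-1}Y(K_1-z)^{-1}\,dz$ along the rectangular contour and gains the sharp power by the change of variables $t\mapsto\delta u$ in the line integral (each resolvent contributes $(\sqrt{t^2+\delta^2})^{-1}$, and the integral over the length-$O(1)$ contour produces $\delta^{-(m-1)}$ when $m$ resolvents are present, not $\delta^{-m}$). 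Your argument can be salvaged without the integral formula, but it requires an extra structural observation you do not make: since $X=\Pi_0 X\Pi_0^\perp$, the Sylvester equation $K_0X-XK_1=Y$ is the \emph{same} as the commutator equation $[H_0,X]=Y$, and $H_0$ — unlike $H_0\Pi_0$ — has derivatives bounded \emph{uniformly in $\delta$}. Differentiating $[H_0,X]=Y$, projecting onto the off-diagonal block $\Pi_0(\cdot)\Pi_0^\perp$, and recovering the remaining blocks of $\partial^\alpha X$ from the constraint $X=\Pi_0 X\Pi_0^\perp$ then closes the induction with the correct exponent. As written, though, your inner induction is off by a factor of $\delta^{-1}$ per derivative and does not prove the lemma.
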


\begin{proof}
We argue by induction and use Corollary~\ref{cor:smooth} (2). The result holds for $\Pi_0$. Let us now   assume that it holds for the $\Pi_j$-s with  $0\leq j\leq k$ for some $k\in\N$ and extend the result to $\Pi_{k+1}$. We revisit the construction of $\Pi_{k+1}$.
\smallskip 

The off-diagonal part  of the operator $\Pi_{k+1}$ is obtained by solving two Sylvester problems~\eqref{sylv_Pi0} with datas 
$\Pi_0 S_{k+1,0} \Pi_0^\perp$ and $-\Pi_0 S_{k+1,0}^*\Pi_0^\perp$
 respectively. Since $S_{k+1,0}=T_{k+1,0}-[H_{k+1},\Pi_0]$ where the $H_k$-s have uniformly bounded derivatives, the Equation~\eqref{eq:Hk} defining $T_{k+1,0}$, the formula of symbolic calculus in \cite[Théorème 2.1.12]{keraval} and the induction assumption imply
 \begin{align*}
\forall (x,\xi)\in\Omega,\;\;\|\ &
\partial^\alpha T_{k+1,0}(x,\xi)\|_{\mathscr L(\mathscr A,\mathscr B)}\leq c_{\alpha,k} \delta^{-|\alpha|-2k-1},
    \end{align*}
    for some $c_{\alpha,k}>0$. 
Corollary~\ref{cor:smooth} (2) then implies that there exists $\tilde c_{\alpha,k}>0$ such that  for all $(x,\xi) \in\Omega$, 
 \begin{align*}
&
\|\partial^\alpha (\Pi_0^\perp \Pi_{k+1} \Pi_0)(x,\xi))\|_{\mathscr L(\mathscr B,\mathscr A)}+
\|\partial^\alpha (\Pi_0 \Pi_{k+1} \Pi_0^\perp)(x,\xi))\|_{\mathscr L(\mathscr B,\mathscr A)}\leq \tilde c_{\alpha,k} \delta^{-|\alpha|-2k-2}.
    \end{align*}
Let us now consider the diagonal part of $\Pi_{k+1}$, which are given by Equation~\eqref{eq:Pi_k+1_diag}. Using again \cite[Théorème 2.1.12]{keraval}, the term $R_{k+1,0}$ involves terms of the form $\partial^{\beta_1}\Pi_{\ell_1} \partial^{\beta_2}\Pi_{\ell_2} $ with $\beta_1,\beta_2\in\N^{2d}$, $|\beta_1|=|\beta_2|=k-\ell_1-\ell_2$. Therefore, the induction equation implies that for $\alpha\in\N^{2d}$, there exists $C_{\alpha,\beta_1,\beta_2,\ell_1,\ell_2}>0$ such that 
\begin{align*}
    \|\partial^\alpha \left(
\partial^{\beta_1}\Pi_{\ell_1} \partial^{\beta_2}\Pi_{\ell_2} \right)\|_{\mathscr L(\mathscr B,\mathscr A)} &\leq C_{\alpha,\beta_1,\beta_2,\ell_1,\ell_2}  \delta ^{ -|\alpha| -|\beta_1|-2\ell_1-1 -|\beta_2|-2\ell_2-1}\\
&\leq C_{\alpha,\beta_1,\beta_2,\ell_1,\ell_2}\delta^{ -2(k+1) -|\alpha|}
\end{align*}
We deduce the existence of $\tilde c_{\alpha,k}>0$ such that 
\begin{align*}
\forall (x,\xi)\in\Omega,\;\;&\| 
\partial^\alpha R_{k+1,0}(x,\xi)\|_{\mathscr L(\mathscr A,\mathscr B)}\leq \tilde c_{\alpha,k} \delta^{-|\alpha|-2k-2}
    \end{align*}
and \eqref{eq:Pi_k+1_diag} gives the conclusion. 
\end{proof}

\subsubsection{Orthogonality: Proof of Proposition \ref{prop:orthogonalprojadiab}}
Suppose that we have constructed both symbols $\Pi^1$ and $\Pi^2$ of the superadiabatic projectors
as in the proof of Theorem~\ref{thm:main}. We set as before
\begin{equation*}
	\forall i\in\{1,2\},\,\forall k\in\N,\quad \Pi_{[k]}^i = \Pi_{0}^i + h\Pi_1^i +\cdots h^k \Pi_k^i\,.
\end{equation*}
We wish to prove that, for $k\in\N$, we have 
\begin{equation*}
	\Pi_{[k]}^1 \circledast \Pi_{[k]}^2 = \mathscr{O}(h^{k+1})\,,
\end{equation*}
which readily gives the desired result.
\smallskip

 The base case of the induction is true since 
$\Pi_0^1 \Pi_0^2 = 0$.
We now suppose that, for some $k\in\N$, there exists a symbol $X_{k+1}$ such that 
\begin{equation}
	\label{eq:defXk}
	\Pi_{[k]}^1\circledast \Pi_{[k]}^2 = h^{k+1}X_{k+1}\,.
\end{equation}
It is sufficient to establish that
\begin{equation}
	\label{eq:compatibilityXk}
 \Pi_0^1 (\Pi_{k+1}^2) + \Pi_{k+1}^1 (\Pi_0^2) +X_{k+1,0} =0.
\end{equation}

To prove this compatibility relation, we start by deriving other symbolic equations from the definition of $X_{k+1}$.
First, by Equation \eqref{eq:defXk}, we can write
\begin{align*}
	h^{k+1}\Pi_{[k]}^1\circledast X_{k+1}& = \Pi_{[k]}^1\circledast \Pi_{[k]}^1 \circledast \Pi_{[k]}^2\\& = \Pi_{[k]}^1 \circledast \Pi_{[k]}^2
	+ h^{k+1} R_{k+1}^1 \circledast \Pi_{[k]}^2\\ &= h^{k+1} (X_{k+1} +R^1_{k+1} \circledast \Pi_{[k]}^2),
\end{align*}
where $R_{k+1}$ is defined by Equation \eqref{eq:Pik} in the construction of the symbol $\Pi^1$. Doing similarly by composing on the right by
$\Pi_{[k]}^2$, we obtain the set of equations 
\begin{equation}
	\label{eq:compaproj}
	\begin{cases}
		&\Pi_0^{1,\perp}X_{k+1,0} + R_{k+1,0}^1\Pi_0^2 = 0,\\
		&X_{k+1,0}\Pi_0^{2,\perp}+ \Pi_0^1 R_{k+1,0}^2 = 0.
	\end{cases}
\end{equation}
Secondly, once again by definition of the symbol $X_k$, we can write 
\begin{equation*}
	\begin{aligned}
		h^{k+1} [H_{[k]},X_{k+1}]_{\circledast} &= [H_{[k]},\Pi_{[k]}^1\circledast \Pi_{[k]}^2]_{\circledast}\\
		&= [H_{[k]},\Pi_{[k]}^1]_{\circledast} \circledast \Pi_{[k]}^2+ \Pi_{[k]}^1\circledast [H_{[k]}, \Pi_{[k]}^2]_{\circledast}\\
		&= h^{k+1} T_{k+1}^1 \circledast \Pi_{[k]}^2+ h^{k+1} \Pi_{[k]}^1\circledast T_{k+1}^2.
	\end{aligned}
	\end{equation*}
Thus, we observe that the symbol $\Pi_0^1 X_{k+1,0}\Pi_0^2$ is solution of  
\begin{equation}\label{eq:dexieme_prop}
	[H_0,\Pi_0^1 X_{k+1,0}\Pi_0^2] = \Pi_0^1T_{k+1,0}^1\Pi_0^2+\Pi_0^1T_{k+1,0}^2\Pi_0^2.
\end{equation}
\smallskip 

Let us now prove~\eqref{eq:compatibilityXk}.
From~\eqref{eq:compaproj}, one checks readily that the desired compatibility relation \eqref{eq:compatibilityXk} composed on the left by
$\Pi_0^{1,\perp}$ and on the right by $\Pi_0^{2,\perp}$ holds. If we compose it on the left by $\Pi_0^{1,\perp}$ and on the right by $\Pi_0^2$,
we obtain 
\begin{equation*}
	\Pi_0^{1,\perp}X_{k+1,0}\Pi_0^2 + \Pi_0^{1,\perp}\Pi_{k+1}^1 \Pi_0^{1,\perp}\Pi_0^2 = 0.
\end{equation*}
This equation holds thanks to the fact that by Equation~\eqref{eq:Pi_k+1_diag}, we have 
\[
\Pi_0^{1,\perp}\Pi_{k+1}^1\Pi_0^{1,\perp}= \Pi_0^{1,\perp}R_{k+1,0}^1\Pi_0^{1,\perp}
\]
and using the first equation 
in \eqref{eq:compaproj}. We argue similarly  when composing Equation \eqref{eq:compatibilityXk} on the left by $\Pi_0^1$ and on the right by $\Pi_0^{2,\perp}$.

\smallskip

It remains  to check the compatibility relation~\eqref{eq:compatibilityXk} when composed on the left by $\Pi_0^1$ and on the right by $\Pi_0^2$, which writes 
\begin{equation}
	\label{eq:lastcompXk}
	\Pi_0^1X_{k+1,0}\Pi_0^2 + \Pi_0^1 \Pi_{k+1}^2 \Pi_0^2 + \Pi_0^1\Pi_{k+1}^1 \Pi_0^2 = 0.
\end{equation}
Recall that by~\eqref{eq:condition_k_0}, the symbols $\Pi_{k+1}^1$ and $\Pi_{k+1}^2$ satisfy the equations 
\begin{equation*}
	\begin{cases}
		&[H_0,\Pi_{k+1}^1]=- T_{k+1,0}^1- [H_{k+1},\Pi_0^1],\\
		&[H_0,\Pi_{k+1}^2]= -T_{k+1,0}^2- [H_{k+1},\Pi_0^2],
	\end{cases}
\end{equation*}
 with the symbols $T_{k+1}^i$, $i=1,2$, defined by Equation \eqref{eq:Hk}. Thus, since $\Pi_0^1$ and $\Pi_0^2$ commutes with $H_0$,
the symbol 
\[
P_{k+1} = \Pi_0^1 \Pi_{k+1}^2 \Pi_0^2 + \Pi_0^1\Pi_{k+1}^1 \Pi_0^2
\]
is solution of 
\begin{align*}
	[H_0,P_{k+1}] &=\Pi_0^1 [H_0, \Pi_{k+1}^2+\Pi_{k+1}^1 ]\Pi_0^2=- \Pi_0^1( T^1_{k+1} + T^2_{k+1})\Pi_0^2
\end{align*}
where we have used~\eqref{eq:condition_k_0}. Comparing with~\eqref{eq:dexieme_prop}, we deduce from 
the uniqueness of the solution of the Sylvester problem that $P_{k+1}=-\Pi_0^1 X_{k+1,0}\Pi_0^2$, which is 
the last equality  required for the compatibility relation~\eqref{eq:lastcompXk} to hold. This concludes
the proof of Proposition \ref{prop:orthogonalprojadiab}.

\subsection{Reduction to scalar operators and mutual quasimodes}
\label{sec:specandquasi}

Let us now prove Theorem~\ref{thm:spectral}. 

\subsubsection{A useful lemma}
We first construct the operators $\ell^w$ and~$L^w$ thanks to 
the following lemma.

\begin{lemma}\label{lem:existenceell}
	Under Assumption \ref{assumption:existencebase}, there exist two symbols $\ell$ and $L$ belonging to $S(\R^{2d},\mathscr{L}(\mathscr{A},\C))\cap S(\R^{2d},\mathscr{L}(\mathscr{B},\C))$ such that
	\begin{equation*}
		L^w (\ell^w)^* = {\rm Id}_{L^2(\R^d,\mathscr B)}+\mathscr{O}(h^\infty),\quad L^w = L^w \Pi^w+\mathscr{O}(h^\infty)\quad \mbox{and}\quad \ell^w = \ell^w \left( \Pi^w \right)^* + \mathscr{O}(h^\infty).
	\end{equation*}
\end{lemma}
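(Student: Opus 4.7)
The natural choice of principal symbols is $L_0(x,\xi)v := (u_0(x,\xi), v)_{\mathscr B}$ and $\ell_0^*(x,\xi)z := z\, u_0(x,\xi)$, which are smooth functions of $(x,\xi)$ by Assumption~\ref{assumption:existencebase}. All three identities hold exactly at the principal level: $L_0\ell_0^* = \|u_0\|_{\mathscr B}^2 = 1$; $L_0\Pi_0 = (u_0, \Pi_0\,\cdot)_{\mathscr B} = (\Pi_0 u_0, \cdot)_{\mathscr B} = L_0$ (using $\Pi_0^* = \Pi_0$ and $\Pi_0 u_0 = u_0$); and by a symmetric computation $\ell_0(\Pi_0)^* = \ell_0$.

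For the higher orders, I would construct $L_j, \ell_j$ ($j\geq 1$) by induction on $j$, in the spirit of the construction of $\Pi$ in the proof of Theorem~\ref{thm:main}. Assume that $L_{[k]} := \sum_{j=0}^k h^j L_j$ and $\ell_{[k]} := \sum_{j=0}^k h^j \ell_j$ already satisfy the three relations modulo $h^{k+1}$, and denote by $r_{k+1,0}$, $s_{k+1,0}$, $t_{k+1,0}$ the principal symbols of the respective order-$(k+1)$ remainders. Including the yet-unknown $L_{k+1}$ and $\ell_{k+1}$ in the next truncation and extracting the $h^{k+1}$-coefficient of each identity, the three equations to solve are
\begin{equation*}
    L_0\ell_{k+1}^* + L_{k+1}\ell_0^* = -r_{k+1,0}, \quad L_{k+1}\Pi_0^\perp = -s_{k+1,0}, \quad \ell_{k+1}\Pi_0^\perp = -t_{k+1,0}.
\end{equation*}
Since the range of $\Pi_0$ is the one-dimensional space spanned by $u_0$, the ``$\Pi_0$-diagonal'' components $L_{k+1}\Pi_0$ and $\Pi_0\ell_{k+1}^*$ are scalar multiples of the principal symbols, i.e.\ $L_{k+1}\Pi_0 = \alpha_{k+1} L_0$ and $\Pi_0\ell_{k+1}^* = \beta_{k+1}\ell_0^*$ for smooth scalar symbols $\alpha_{k+1}, \beta_{k+1}$. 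Using $L_0\Pi_0^\perp = 0$ and $\Pi_0^\perp u_0 = 0$, the first equation reduces to the scalar identity $\alpha_{k+1} + \beta_{k+1} = -r_{k+1,0}$, which I fix by the symmetric choice $\alpha_{k+1} = \beta_{k+1} = -r_{k+1,0}/2$. In the selfadjoint case this choice preserves $L_j = \ell_j$ throughout the induction and yields the equality $L = \ell$.

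The main obstacle is the verification of the solvability of the two off-diagonal equations, namely that $s_{k+1,0}$ and $t_{k+1,0}$ annihilate the range of $\Pi_0$ on the right; this plays the role of the compatibility relations~\eqref{comp_1}--\eqref{comp_2} in the proof of Theorem~\ref{thm:main}. For the $L$-equation I would compose the induction hypothesis $L_{[k]}^w - L_{[k]}^w \Pi^w = h^{k+1}s_{k+1}^w + \mathscr{O}(h^{k+2})$ on the right by $\Pi^w$ and exploit $(\Pi^w)^2 = \Pi^w + \mathscr{O}(h^\infty)$ (Theorem~\ref{thm:main}) to conclude that the left-hand side is $\mathscr{O}(h^\infty)$, forcing $s_{k+1}^w\Pi^w = \mathscr{O}(h^\infty)$ and hence $s_{k+1,0}\Pi_0 = 0$. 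The analogous manipulation with $(\Pi^w)^*$ replacing $\Pi^w$ yields $t_{k+1,0}\Pi_0 = 0$. Smoothness and uniform boundedness of the derivatives of the iteratively constructed $L_j, \ell_j$ follow from the explicit formulas combined with Lemma~\ref{lem:derivative_proj} and Assumption~\ref{assumption:existencebase}, and a Borel summation produces the genuine symbols $L, \ell$ in the claimed classes satisfying the three identities modulo $\mathscr{O}(h^\infty)$.
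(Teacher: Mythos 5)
Your proposal is correct and follows the same inductive construction as the paper's own proof: same base case $L_0=\ell_0=(u_0,\cdot)_{\mathscr B}$, same three order-$(k+1)$ equations splitting into a scalar diagonal constraint and two off-diagonal determinations, and the same compatibility relations to verify. The one cosmetic difference is in how you verify the compatibility: you compose the remainder equation with the full $\Pi$ and invoke $\Pi\circledast\Pi=\Pi+\mathscr O(h^\infty)$ directly, whereas the paper composes with the truncation $\Pi_{[k]}$, extracts the relation $L_0R_{k+1,0}+U_{k+1,0}\Pi_0=0$, and then feeds in the identity $\Pi_0\Pi_{k+1}\Pi_0=-\Pi_0R_{k+1,0}\Pi_0$ from the construction of $\Pi$. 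Your route packages those two steps into one and is slightly cleaner, but it is the same underlying computation. One small inaccuracy in the writing: composing $L_{[k]}^w-L_{[k]}^w\Pi^w=h^{k+1}s_{k+1}^w+\mathscr O(h^{k+2})$ on the right by $\Pi^w$ gives $h^{k+1}s_{k+1}^w\Pi^w=\mathscr O(h^{k+2})$, i.e.\ $s_{k+1}^w\Pi^w=\mathscr O(h)$ rather than $\mathscr O(h^\infty)$ as written; this is all you need, since it still yields $s_{k+1,0}\Pi_0=0$ at principal order. Your explicit symmetric choice $\alpha_{k+1}=\beta_{k+1}=-r_{k+1,0}/2$ is a nice observation (the paper leaves this gauge freedom implicit) and correctly explains why $L=\ell$ in the selfadjoint case, a point the paper asserts in Theorem~\ref{thm:spectral} without detailing.
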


\begin{proof}
	We construct, by induction, formal symbols $\ell, L$ of the form
	\begin{equation*}
		\ell = \sum_{k\geq 0} h^k \ell_k,
	\end{equation*}
	which satisfy the relations 
	\begin{equation*}
		L \circledast \ell^* = 1 +\mathscr{O}(h^\infty)\quad , \quad L = L \circledast \Pi+\mathscr{O}(h^\infty)\quad \mbox{and}\quad \ell = \ell \circledast \Pi^* + \mathscr{O}(h^\infty).
	\end{equation*}
	To this end, we introduce, as before, the partial symbols 
	\[
	\ell_{[k]} = \sum_{j=0}^k h^j \ell_j\;\; \mbox{and} \;\;L_{[k]} = \sum_{j=0}^k h^j L_j.
	\]
	
	\medskip
	
	\noindent 	By Assumption \ref{assumption:existencebase}, the base case of our induction is given by 
	\begin{equation*}
		\ell_0 = (u_0,\cdot)_\mathscr{B}\quad \mbox{and} \quad L_0 = (u_0,\cdot)_\mathscr{B}.
	\end{equation*} 
Now, suppose that for $k\in\N$, we have 
	\begin{align}\label{eq:remainderVW}
		&L_{[k]} \circledast \ell_{[k]}^* = 1 +h^{k+1} W_{k+1}, \\
		\nonumber 
		&L_{[k]} = L_{[k]}\circledast \Pi_{[k]} + h^{k+1}U_{k+1}, \\
		\nonumber
		&\ell_{[k]} = \ell_{[k]} \circledast \Pi_{[k]}^* + h^{k+1}V_{k+1}.
	\end{align}
	The conditions on $\ell_{k+1}$ and $L_{k+1}$ to obtain the next order in $h$ are given by
	\begin{equation*}
		\begin{aligned}
			&L_{k+1} \ell_0^* + L_0\ell_{k+1}^* =- W_{k+1,0},\\
			&L_{k+1} = L_{k+1}\Pi_0 + L_0 \Pi_{k+1} - U_{k+1,0},\\
			&\ell_{k+1} = \ell_{k+1} \Pi_0^* +  \ell_0 \Pi_{k+1}^* - V_{k+1,0}.
		\end{aligned}
	\end{equation*}
	As, for $(x,\xi)\in\R^{2d}$, $\ell_{k+1}(x,\xi)$ and $L_{k+1}(x,\xi)$ are linear forms on $\mathscr{B}$, we look for  $(x,\xi)\mapsto u_{k+1}(x,\xi) \in C^\infty(\R^{2d}, \mathscr{B})$ and $(x,\xi)\mapsto v_{k+1}(x,\xi) \in C^\infty(\R^{2d},  \mathscr{B})$ such that 
	$L_{k+1} = (u_{k+1},\cdot)_{\mathscr{B}}$ and $\ell_{k+1} = (v_{k+1},\cdot)_{\mathscr{B}}$. Then, the previous conditions become
	\begin{equation*}
		\begin{aligned}
			&(u_{k+1} , u_0)_{\mathscr{B}} + (u_0 , v_{k+1})_{\mathscr{B}} =- W_{k+1,0},\\
			&(u_{k+1},\cdot)_{\mathscr{B}} = (u_{k+1},\Pi_0 \cdot)_{\mathscr{B}} + (u_0,\Pi_{k+1}\cdot)_\mathscr{B}  -U_{k+1,0}(\cdot),\\
			&(v_{k+1},\cdot)_{\mathscr{B}} = (v_{k+1}, \Pi_0 \cdot)_{\mathscr{B}} + (u_0,\Pi_{k+1}^* \cdot)_\mathscr{B}  -V_{k+1,0}(\cdot).
		\end{aligned}
	\end{equation*}
	The first equation determines a relation between the coordinates of $u_{k+1}$ on ${\rm Ran}(\Pi_0)$ and the coordinates of $v_{k+1}$ on ${\rm Ran}(\Pi_0)$ while the second and the third determine $u_{k+1}$ and $v_{k+1}$ on ${\rm Ran}(\Pi_0^\perp)$.
	However, two compatibility conditions have to be fulfilled in order that  the two last equations have solutions: for all  $\varphi \in {\rm Ran}(\Pi_0)$, one  must have 
	\begin{equation}\label{comp_18}
	\begin{aligned}
		&(u_0,\Pi_{k+1}\varphi)_{\mathscr{B}} - U_{k+1,0}(\varphi)=0 \;\;\mbox{and}\;\;
		(u_0,\Pi_{k+1}^*\varphi) _{\mathscr{B}}- V_{k+1,0}(\varphi)=0.
	\end{aligned}
	\end{equation}
	This is indeed satisfied as one observes from Equation \eqref{eq:remainderVW} that we have 
	\begin{equation*}
		\begin{aligned}
			L_{[k]}\circledast \Pi_{[k]} &= L_{[k]} \circledast \Pi_{[k]} \circledast \Pi_{[k]} + h^{k+1} U_{k+1} \circledast \Pi_{[k]} \\
			&= L_{[k]} \circledast \Pi_{[k]} + h^{k+1} L_{[k]} \circledast R_{k+1}+ h^{k+1} U_{k+1} \circledast \Pi_{[k]}, \\
			\ell_{[k]} \circledast \Pi_{[k]}^* &= \ell_{[k]} \circledast \Pi_{[k]}^* \circledast \Pi_{[k]}^* + h^{k+1} V_{k+1} \circledast \Pi_{[k]}^* \\
			&= \ell_{[k]} \circledast \Pi_{[k]}^* + h^{k+1} \ell_{[k]} \circledast R_{k+1}^* + h^{k+1} V_{k+1} \circledast \Pi_{[k]}^*,
		\end{aligned}
	\end{equation*}
	where we have used Equation~\eqref{eq:Pik}. 
We deduce 	
	\begin{equation*}
	    \begin{aligned}
		&L_0 R_{k+1,0} + U_{k+1,0}\Pi_0 = 0\;\;\mbox{and}\;\;\ell_0 R_{k+1,0}^* + V_{k+1,0}\Pi_0 = 0.
		\end{aligned}
	\end{equation*}
	Since we have found that $\Pi_0 \Pi_{k+1}\Pi_0 = -\Pi_0 R_{k+1,0}\Pi_0$ in the proof of Theorem \ref{thm:main}, we obtain that for $\varphi\in{\rm Ran}(\Pi_0)$,
	\begin{align*}
	(u_0,\Pi_{k+1}\varphi)_{\mathscr{B}} -
	U_{k+1,0}(\varphi)& =(u_0,\Pi_{k+1}\varphi) _{\mathscr{B}}
	+L_0 R_{k+1,0} (\varphi)\\
	&= (u_0,\Pi_0\Pi_{k+1}\Pi_0\varphi)_{\mathscr{B}} +(u_0,R_{k+1,0} \varphi)_{\mathscr B}\\
	&=(u_0,\Pi_{k+1}\varphi)_{\mathscr{B}} - (u_0, \Pi_0\Pi_{k+1}\Pi_0 \varphi )_{\mathscr B}=0,
	\end{align*}
and	the first compatibility relation in~\eqref{comp_18} is satisfied. The other relation is derived  similarly, which concludes the proof. 
\end{proof}

\subsubsection{Proof of Theorem~\ref{thm:spectral}}
We start by proving the second relation of Equation~\eqref{eq:ell_L}.
	From Lemma \ref{lem:existenceell}, 
	%we observe that by construction 
	we have
	\begin{equation*}%\label{eq:partialiso}
		L^w (\ell^w)^* = {\rm Id}_{L^2(\R^d,\mathscr B)}+\mathscr{O}(h^\infty).
	\end{equation*}
	We denote by $P^w = (\ell^w)^*L^w$. Our goal is to prove we have $P^w = \Pi^w + \mathscr{O}(h^\infty)$.
	
	First observe that up to $\mathscr{O}(h^\infty)$, $P^w$ is a projection:
	\begin{equation*}
		P^w P^w = (\ell^w)^*L^w (\ell^w)^*L^w = (\ell^w)^* \Big( {\rm Id}_{L^2(\R^d,\mathscr B)} + \mathscr{O}(h^\infty) \Big) L^w = P^w + \mathscr{O}(h^\infty).
	\end{equation*}
	Now, by  Lemma \ref{lem:existenceell}, we also have 
	\[
	L^w = L^w \Pi^w+\mathscr{O}(h^\infty),
	\]
	from which we deduce 
	\begin{equation} \label{eq:ppi}
		P^w\Pi^w = (\ell^w)^*L^w\Pi^w = (\ell^w)^*L^w + \mathscr{O}(h^\infty) = P^w + \mathscr{O}(h^\infty).
	\end{equation}
	Using that Lemma~\ref{lem:existenceell} yields $(\ell^w)^* = \Pi^w(\ell^w)^* + \mathscr{O}(h^\infty)$, we derive from a similar argument 
	\begin{equation} \label{eq:pip}
		\Pi^wP^w = \Pi^w(\ell^w)^*L^w = (\ell^w)^*L^w + \mathscr{O}(h^\infty) = P^w + \mathscr{O}(h^\infty).
	\end{equation}
	Now, by comparing the principal symbols of $P^w$ and $\Pi^w$,  we also have 
	\begin{equation*}
		P^w = \Pi^w + \mathscr{O}(h).
	\end{equation*}
	For concluding the proof, we consider the operator $R^w  \coloneqq \Pi^w - P^w$. Using
	\[
	\Pi^w\Pi^w=\Pi^w+\mathscr{O}(h^\infty),
	\;\;\mbox{and}\;\;
	P^wP^w=P^w+\mathscr{O}(h^\infty),
	\]
	and the  Equations~\eqref{eq:ppi} and~\eqref{eq:pip}, we deduce  that $R^w$ is a projection up to $\mathscr{O}(h^\infty)$, i.e.  $R^wR^w=R^w+\mathscr{O}(h^\infty)$. Therefore, 
	\[
	R^w \Big( {\rm Id}_{L^2(\R^d,\mathscr{B})} - \underbrace{R^w}_{=\mathscr{O}(h)} \Big) = \mathscr{O}(h^\infty).
	\]
	Since for $h$ small enough, ${\rm Id}_{L^2(\R^d, \mathscr{B}) }- R^w$ is bijective,  we conclude that $\Pi^w - P^w = \mathscr{O}(h^\infty)$, which terminates the proof.

\subsubsection{Proof of Corollary~\ref{cor:spectrum}}
We now focus on the proof of Corollary~\ref{cor:spectrum}.
\smallskip 

Let $\mu\in\mathrm{sp}(H^w)\cap(-\infty,\lambda)$ and $\psi$ a corresponding eigenfunction. We have $H^w\psi=\mu\psi$ so that, by using Theorem \ref{thm:main}, $H^w\Pi_j^w\psi=\mu\Pi_j^w\psi+\mathscr{O}(h^\infty)\|\psi\|$ for all $j\in\{1,\ldots,N\}$. Thanks to Theorem \ref{thm:spectral}, we deduce 
\[
H^w(\ell_j^w)^*L_j^w\psi=\mu (\ell_j^w)^*L_j^w\psi+\mathscr{O}(h^\infty)\|\psi\|
\]
and then 
\[
L_j^wH^w(\ell_j^w)^*L_j^w\psi=\mu L_j^w\psi+\mathscr{O}(h^\infty)\|\psi\|.
\] 
Setting $\Psi={}^t(L_1^w\psi,\ldots,L_N^w\psi)$, this shows that $M^w\Psi=\mu\Psi+\mathscr{O}(h^\infty)\|\psi\|$. Therefore, it remains to show that $\|\psi\|\leq C\|\Psi\|$. To see this, we observe  
\[
\Pi^w=\Pi_1^w+\ldots+\Pi_N^w+\mathscr{O}(h^\infty)
\]
(as a consequence of Theorem \ref{thm:main} and Proposition \ref{prop:orthogonalprojadiab}). This implies 
\[\|\Pi^w\psi\|^2\leq \sum_{j=1}^N\|L_j\psi\|^2+Ch\|\psi\|^2\,.\]
We have
\[(H^w-\mu)(1-\Pi^w)\psi+\Pi^w\psi=\Pi^w\psi+\mathscr{O}(h^\infty)\|\psi\|\,.\]
Then, we notice that $(H_0-\mu)\Pi_0^\perp+\Pi_0 : \mathscr{A}\to\mathscr{B}$ is bijective, with uniformly bounded inverse. Hence, with the composition theorem and the Calder\'on-Vaillancourt theorem, we get
\[\|\psi\|\leq C\|\Pi^w\psi\|\,.\]
This shows that $\|\psi\|\leq C\|\Psi\|$.
\smallskip 

Conversely, let $\mu \in \mathrm{sp}(M^w) \cap (-\infty,\lambda)$ and $\Phi$ a corresponding  eigenfunction. We have $L_j^w H^w(\ell_j^w)^*\Phi_j=\mu\Phi_j$ so that $(\ell_j^w)^*L_j^w H^w(\ell_j^w)^*\Phi_j=\mu(\ell_j^w)^*\Phi_j$. Thus, 
\[
H^w(\ell_j^w)^*L_j^w(\ell_j^w)^*\Phi_j=\mu(\ell_j^w)^*\Phi_j+\mathscr{O}(h^\infty)\|\Phi_j\|
\]
and 
\[
H^w(\ell_j^w)^*\Phi_j=\mu(\ell_j^w)^*\Phi_j+\mathscr{O}(h^\infty)\|\Phi_j\|.
\]
We let $\varphi = (\ell_1^w)^*\Phi_1+\ldots+(\ell_N^w)^*\Phi_N$ and we get $ H^w\varphi=\mu\varphi+\mathscr{O}(h^\infty)\|\Phi\|$. Thanks to Proposition \ref{prop:orthogonalprojadiab}, we notice that, for $m\neq n$, $\Pi_m^w\Pi_n^w=\mathscr{O}(h^\infty)$ so that 
\[
(\ell_m^w)^*L_m^w(\ell_n^w)^*L_n^w=\mathscr{O}(h^\infty).
\]
Hence, $L_m^w(\ell_n^w)^*=\mathscr{O}(h^\infty)$. This shows 
\[
\forall j\in\{1,\ldots,N\}, \;\;L_j^w\varphi=\Phi_j+\mathscr{O}(h^\infty)\|\Phi\|,
\]
which implies  $\|\Phi\|\leq C\|\varphi\|$.
\smallskip

When $H^w$ is selfadjoint, so is $M^w$. Therefore, in this case, the fact that the spectra of $M^w$ and $H^w$ coincide modulo $\mathscr{O}(h^\infty)$ is a consequence of the spectral theorem.

\subsection{Superadiabatic  and spectral projectors in the self-adjoint case}\label{sec:sabb}
We assume here that the operator $H^w$ under study is selfadjoint and revisit what bring the superadiabatic projectors of Theorem~\ref{thm:main} to the spectral theory of the operator $H^w$. The following proposition gives an insight into the relations between the superadiabatic projector $\Pi^w$ and the spectral functions of the Hamiltonian $H^w$. 

\begin{proposition}\label{prop:commutespectral}
Consider the same assumptions as in Theorem~\ref{thm:main} and assume that the operator $H^w$  is selfadjoint. Then, we have the following properties:
\begin{enumerate}
    \item  For any function $\chi\in C_{c}^\infty(\R)$ we have 
   $     [\chi(H^w),\Pi^w] = \mathscr{O}(h^\infty)$, in $L^2(\R^d,\mathscr{B})$.
    \item Assume that there exists an energy level $\lambda\in\R$ such that 
\begin{equation}\label{eq:localspectralH0}
		\exists \eps>0,\ \sigma(\restriction{H_{0}}{{\rm Ran}(\Pi_0^\perp)}) \subset (\lambda+\eps,+\infty).
\end{equation}
Then for any function $\chi\in C_{c}^\infty(\R)$ such that ${\rm supp}(\chi) \subset (-\infty,\lambda]$, we have in  $L^2(\R^d,\mathscr{B})$.
	\begin{equation*}
		 \Pi^w\chi(H^w) = \chi(H^w) + \mathscr{O}(h^\infty).
	\end{equation*}
\end{enumerate}
\end{proposition}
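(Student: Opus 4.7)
The plan is to use the Helffer-Sjöstrand functional calculus for the self-adjoint operator $H^w$. Recall that for any $\chi\in C_c^\infty(\R)$, and any almost analytic extension $\tilde\chi$ of $\chi$ with compact support and $|\bar\partial\tilde\chi(z)|=\mathscr{O}_N(|\Im z|^N)$ for all $N$, one has
\[
\chi(H^w) = -\frac{1}{\pi}\int_{\C}\bar\partial\tilde\chi(z)\,(z-H^w)^{-1}\,dL(z),
\]
together with the resolvent bound $\|(z-H^w)^{-1}\|\leq |\Im z|^{-1}$.

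For part (1), I would plug this representation into the commutator and use the elementary identity
\[
[(z-H^w)^{-1},\Pi^w] = (z-H^w)^{-1}\,[\Pi^w,H^w]\,(z-H^w)^{-1}.
\]
Theorem~\ref{thm:main} furnishes $[\Pi^w,H^w]=\mathscr{O}(h^\infty)$ in $\mathscr{L}(L^2)$, so, combined with the resolvent bound, the integrand is controlled by $C_N\,h^N\,|\bar\partial\tilde\chi(z)|/|\Im z|^{2}$. Choosing the order of vanishing of $\bar\partial\tilde\chi$ along the real axis large enough makes the integral absolutely convergent, giving the $\mathscr O(h^\infty)$ bound on the commutator.

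For part (2), the spectral gap assumption~\eqref{eq:localspectralH0} implies that $z-H_0$ is uniformly invertible on $\operatorname{Ran}(\Pi_0^\perp)$ for $z$ in a complex neighborhood $\Omega$ of $\operatorname{supp}(\chi)\subset(-\infty,\lambda]$. The central step will be to construct, by a symbolic induction parallel to the proof of Theorem~\ref{thm:main}, an admissible operator-valued symbol $R(z)\sim\sum_{k\geq 0} h^k R_k(z)$, depending holomorphically on $z\in\Omega$ and with uniformly bounded $(x,\xi)$-derivatives, satisfying
\[
(z-H)\circledast R(z) = (1-\Pi) + \mathscr{O}(h^\infty),\qquad R(z)=(1-\Pi)\circledast R(z)+\mathscr{O}(h^\infty).
\]
At principal order I take $R_0(z) = \bigl((z-H_0)|_{\operatorname{Ran}(\Pi_0^\perp)}\bigr)^{-1}\Pi_0^\perp$, which is well-defined and holomorphic on $\Omega$ by the gap condition, and smooth in $(x,\xi)$ with bounded derivatives by Assumption~\ref{eq.assumption}. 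Each subsequent $R_k(z)$ is then obtained by inverting $(z-H_0)$ on $\operatorname{Ran}(\Pi_0^\perp)$ against a right-hand side involving lower-order $R_j$ and Moyal-product corrections.

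Quantizing yields $(z-H^w)R(z)^w = (1-\Pi^w)+\mathscr{O}(h^\infty)$, hence
\[
(z-H^w)^{-1}(1-\Pi^w) = R(z)^w + \mathscr{O}(h^\infty)\,|\Im z|^{-1}.
\]
Combining this with part~(1) to commute $(1-\Pi^w)$ through the resolvent, the Helffer-Sjöstrand formula applied to $(1-\Pi^w)\chi(H^w)$ gives
\[
(1-\Pi^w)\chi(H^w) = -\frac{1}{\pi}\int_{\C}\bar\partial\tilde\chi(z)\,R(z)^w\,dL(z) + \mathscr{O}(h^\infty).
\]
Choosing $\tilde\chi$ supported in $\Omega$, the holomorphy of $z\mapsto R(z)^w$ on $\Omega$ and the compact support of $\tilde\chi$ imply, via Stokes' theorem applied to $\bar\partial(\tilde\chi\,R(z)^w) = (\bar\partial\tilde\chi)R(z)^w$, that this last integral vanishes identically. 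This yields $(1-\Pi^w)\chi(H^w)=\mathscr O(h^\infty)$, equivalent to the stated conclusion.

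The main obstacle is the construction of the holomorphic parametrix $R(z)$: the symbolic induction has to be carried out preserving holomorphic dependence on $z$ and uniform symbol estimates throughout $\Omega$. This is made possible precisely by~\eqref{eq:localspectralH0}, which grants the global invertibility of $(z-H_0)$ on $\operatorname{Ran}(\Pi_0^\perp)$ used at the base case and at every step of the induction; the remainder of the argument is then a clean combination of Helffer-Sjöstrand with Stokes' theorem.
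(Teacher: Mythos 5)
Part (1) follows the paper's proof essentially verbatim: Helffer--Sj\"ostrand representation, resolvent identity for the commutator, the $\mathscr O(h^\infty)$ bound on $[\Pi^w,H^w]$ from Theorem~\ref{thm:main}, and the vanishing of $\bar\partial\tilde\chi$ near the real axis. For part (2), however, you take a genuinely different route. The paper works entirely at the operator level: it proves $(H^w-z)^{-1}\Pi^{w,\perp}=(\Pi^{w,\perp}H^w\Pi^{w,\perp}-z)^{-1}\Pi^{w,\perp}+\mathscr O(|\Im z|^{-2}h^\infty)$, plugs this into Helffer--Sj\"ostrand to obtain $\chi(H^w)\Pi^{w,\perp}=\chi(\Pi^{w,\perp}H^w\Pi^{w,\perp})\Pi^{w,\perp}+\mathscr O(h^\infty)$, and then kills the right-hand side via the G\r{a}rding inequality, which pushes the spectrum of $\Pi^{w,\perp}H^w\Pi^{w,\perp}$ on $\mathrm{Ran}(\Pi^{w,\perp})$ above $\lambda+\eps/2$ so that $\chi$ vanishes on it. You instead build a \emph{holomorphic symbolic parametrix} $R(z)$ for $(z-H)$ on $\mathrm{Ran}(\Pi_0^\perp)$, then exploit the holomorphy through Stokes' theorem to make the Helffer--Sj\"ostrand integral vanish identically. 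Both arguments ultimately rely on the same gap hypothesis~\eqref{eq:localspectralH0}; the paper's is shorter and uses a single operator-level inversion, while yours is more constructive and sidesteps G\r{a}rding. One point you should spell out to make the induction airtight: at each step, solving $(z-H_0)R_{k+1}=-\mathscr E_{k+1,0}$ requires verifying that the $\Pi_0$-component of the error is compatible with the constraint $\Pi_0R_{k+1}=-F_{k+1,0}$ coming from $\Pi\circledast R=\mathscr O(h^\infty)$; this compatibility, namely $\Pi_0\mathscr E_{k+1,0}=(z-H_0)F_{k+1,0}$, does hold, but only because $\Pi$ is the full superadiabatic projector ($\Pi\circledast(1-\Pi)=\mathscr O(h^\infty)$ and $[H,\Pi]_\circledast=\mathscr O(h^\infty)$), and this should be checked explicitly rather than being absorbed into the phrase ``the symbolic induction has to be carried out.'' Apart from this under-elaborated step and an innocuous sign in the resolvent-commutator identity, the argument is sound.
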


Point (2) of Proposition~\ref{prop:commutespectral} shows that, when $H^w$ is bounded from below, the range of $\Pi^w$ contains (up to $\mathscr{O}(h^\infty)$) the range of the spectral projector $H^w\mathbb{1}_{(-\infty,\lambda)}(H^w)$. 

%\begin{corollary}\label{cor:localspectrum}
%Consider the same assumptions as in Proposition~\ref{prop:commutespectral} (2) and assume that the operator $H^w$ is bounded from below. Let $a,b\in\mathbb R$ with $[a,b]\subset (-\infty,\lambda)$. Then, the distance between the sets ${\rm sp} (H^w\mathbb{1}_{[a,b]}(H^w)) $ and ${\rm sp} (\Pi^wH^w\mathbb{1}_{[a,b]}(H^w))$  is of order  $\mathscr O(h^\infty)$.
%\end{corollary}

%\begin{proof}[Proof of Corollary~\ref{cor:localspectrum}]
%Since $H^w$ is bounded from below, there exists a compactly %supported function $\chi\in C^\infty((-\infty,\lambda))$ with %$\chi=1$ on $[a,b]$ such that 
%\[
%{\rm sp}\left(H^w\mathbb{1}_{[a,b]}(H^w)\right)={\rm %sp}\left(\chi(H^w) H^w\right).\]
%Set $A:=\chi(H^w) H^w$ and $B:= \Pi^wH^w \chi(H^w)$. By  %Proposition~\ref{prop:commutespectral} (2), $A=B+R$ with %$R=\mathscr O(h^\infty)$. Therefore, if $z\notin {\rm sp}(A)$ with %$d(z,{\rm sp}(A))\geq Ch^N$ for some $C>0$ and $N\in\mathbb N$, %then, writing 
%\[
%z-B=z-A+R=(z-A) ({\rm Id}+(z-A)^{-1} R)
%\]
%and observing that $\| (z-A)^{-1} R\|_{L^2(\R^d,\mathscr{B})}\leq %C^{-1} h^{-N} \mathscr O (h^\infty)< 1$, we deduce that the %operators ${\rm Id}+(z-A)^{-1} R$, and thus $z-B$ are invertible, %whence  $z\notin {\rm sp}(B)$. By symmetry, this concludes the %proof of the Corollary~\ref{cor:localspectrum}.
%\end{proof}

The proof of Proposition~\ref{prop:commutespectral}  relies on the Helffer-Sjöstrand formula.      Let $\chi \in C_{c}^\infty(\R)$, as $H^w$ is a selfadjoint operator, we can write
    \begin{equation}\label{eq:HelfferSjostrand}
        \chi(H^w) = \frac{1}{\pi i}\int_{\C} \bar{\partial} \tilde{\chi}(z) \left(H^w-z\right)^{-1}L(\mathrm{d}z),
    \end{equation}
    where $L(\mathrm{d}z) = \mathrm{d}x\mathrm{d}y$ denotes the Lebesgue measure and $\tilde{\chi}\in C_{c}^\infty(\C)$ denotes an almost analytic extension of $\chi$ to the complex plane, i.e. satisfying
    \begin{equation*}
        \tilde{\chi}_{|\R} = \chi\quad\mbox{and}\quad \bar{\partial} \tilde{\chi}_{|\mathbb R} = 0,\quad\mbox{where}\quad \bar{\partial} = \frac{1}{2}(\partial_x + i\partial_y).
    \end{equation*}
    Such almost analytic extensions with compact support are constructed in \cite{Zwobook}; in addition, one can ask for the extension to satisfy
    \begin{equation}\label{control_tildechi}
        \bar{\partial}\tilde{\chi}(z) = \mathscr{O}(|\Im z|^3).
    \end{equation}
    For such a choice of extension, the Helffer-Sjöstrand formula holds.

\begin{proof}[Proof of Proposition \ref{prop:commutespectral}]
(1) By construction in Theorem~\ref{thm:main}, the 
  superadiabatic projector $\Pi^w$, satisfies for any $z\in\C\setminus\R$,
    \begin{equation*}
        [\Pi^w,H^w-z] = \mathscr{R}^w,
    \end{equation*}
    where $\mathscr{R}$ is of order $\mathscr{O}(h^\infty)$ in the symbol class $S(\R^{2d},\mathscr{L}(\mathscr{A},\mathscr{B}))$. We deduce from this that we have
    \begin{equation*}
        [\left(H^w-z\right)^{-1},\Pi^w] = \left(H^w-z\right)^{-1}\mathscr{R}^w \left(H^w-z\right)^{-1},
    \end{equation*}
    which gives immediately
    \begin{equation*}
         [\left(H^w-z\right)^{-1},\Pi^w] = \mathscr{O}(|\Im z|^{-2} h^\infty),
    \end{equation*}
    where the remainder is understood as a bounded operator on $L^2(\R^d,\mathscr{B})$.
    Then, using formula~\eqref{eq:HelfferSjostrand}, since the almost analytic extension $\tilde{\chi}$ is of compact support, we obtain the estimate of Proposition \ref{prop:commutespectral}, i.e
    \begin{equation*}
        \Pi^w \chi(H^w) = \chi(H^w) + \mathscr{O}(h^\infty).
    \end{equation*}

(2)    We now make the assumption that the principal symbol~$\Pi_0$ of the superadiabatic projector~$\Pi^w$ satisfies Equation \eqref{eq:localspectralH0} for some energy level $\lambda\in\R$ and some small parameter $\eps>0$.
    Then, for $\chi\in C_{c}^\infty(\R)$ with support included in the interval $(-\infty,\lambda)$, we have 
    \begin{equation}\label{eq:HScomposition}
        \chi(H^w) \Pi^{w,\perp}= \frac{1}{\pi i}\int_{\C} \bar{\partial} \tilde{\chi}(z) \left(H^w-z\right)^{-1}\Pi^{w,\perp}\,L(\mathrm{d}z)
    \end{equation}
  where $\Pi^{w,\perp}=1-\Pi^w$. We wish to prove the following equality:
    \begin{equation}\label{eq:egaliteinverse}
        \left(H^w-z\right)^{-1}\Pi^{w,\perp} = \left(\Pi^{w,\perp}H^w \Pi^{w,\perp}-z\right)^{-1}\Pi^{w,\perp} + \mathscr{O}(|\Im z|^{-2} h^\infty),
    \end{equation}
    where the remainder is understood as an operator on $L^2(\R^d,\mathscr{B})$. 
%Indeed, because of~\eqref{control_tildechi}, it implies      $\left(H^w-z\right)^{-1}\Pi^{w,\perp}=\mathscr O(h^\infty)$.
\smallskip

Let us prove~\eqref{eq:egaliteinverse}.
    Let $\psi \in L^2(\R^d,\mathscr{B})$ and let us compare
    \begin{equation*}
        \phi_1 = \left(H^w-z\right)^{-1}\Pi^{w,\perp}\psi\quad\mbox{and}\quad \phi_2 = \left(\Pi^{w,\perp}H^w \Pi^{w,\perp}-z\right)^{-1}\Pi^{w,\perp}\psi.
    \end{equation*}
     By construction, $\phi_1,\phi_2 \in L^2(\R^d,\mathscr{A})$   and   satisfy
    \begin{equation*}
        \|\phi_1\|_{L^2(\R^d,\mathscr{A})},\|\phi_2\|_{L^2(\R^d,\mathscr{A})} = \mathscr{O}(|\Im z|^{-1} \|\psi\|_{L^2(\R^d,\mathscr{B})}).
    \end{equation*}
    First, we observe 
    \begin{equation*}
        \Pi^w (H^w-z)\phi_1 = \Pi^w \Pi^{w,\perp}\psi = \mathscr{O}(h^\infty\|\psi\|_{L^2(\R^d,\mathscr{B})}).
    \end{equation*}
    Since we have the estimate $[\Pi^w,H^w-z] = \mathscr{O}(h^\infty)$, with the remainder understood as a bounded operator from $L^2(\R^d,\mathscr{A})$ to $L^2(\R^d,\mathscr{B})$, we obtain
    \begin{equation*}
        (H^w-z)\Pi^w \phi_1 = \mathscr{O}(h^\infty\|\psi\|_{L^2(\R^d,\mathscr{B})}) + \mathscr{O}(h^\infty\|\phi_1\|_{L^2(\R^d,\mathscr{A})}),
    \end{equation*}
    which can be rewritten as 
    \begin{equation*}
        \Pi^w \phi_1 = \left(H^w-z\right)^{-1}\mathscr{O}(|\Im z|^{-1}h^\infty\|\psi\|_{L^2(\R^d,\mathscr{B})}).
    \end{equation*}
    A similar estimate also holds for $\phi_2$:
        \begin{equation*}
        \Pi^w \phi_2 = \left(\Pi^w H^w\Pi^w -z\right)^{-1}\mathscr{O}(|\Im z|^{-1}h^\infty\|\psi\|_{L^2(\R^d,\mathscr{B})}).
    \end{equation*}
    Now, we are able to write
    \begin{equation*}
        \begin{aligned}
            (H^w-z)\phi_1 &= (H^w-z)\Pi^{w,\perp}\phi_1 + (H^w-z)\Pi^w\phi_1\\
            &= (H^w-z)\Pi^{w,\perp}\phi_1 + (H^w-z)\left(H^w-z\right)^{-1}\mathscr{O}(|\Im z|^{-1}h^\infty\|\psi\|_{L^2(\R^d,\mathscr{B})})\\
            &= H^w \Pi^{w,\perp}\Pi^{w,\perp}\phi_1 - z \Pi^{w,\perp}\phi_1 + \mathscr{O}(|\Im z|^{-1}h^\infty\|\psi\|_{L^2(\R^d,\mathscr{B})})\\
            &= (\Pi^{w,\perp}H^w\Pi^{w,\perp}-z)\Pi^{w,\perp}\phi_1 + \mathscr{O}(|\Im z|^{-1}h^\infty\|\psi\|_{L^2(\R^d,\mathscr{B})}),
        \end{aligned}
    \end{equation*}
    where in the last equations we have used the estimates characterizing the superadiabatic projector $\Pi^w$.
    We can rewrite the last equation as
    \begin{equation*}
        (\Pi^{w,\perp}H^w\Pi^{w,\perp}-z)\Pi^{w,\perp}\phi_1 = \Pi^{w,\perp}\psi + \mathscr{O}(|\Im z|^{-1}h^\infty\|\psi\|_{L^2(\R^d,\mathscr{B})}),
    \end{equation*}
    which means that we have obtained the desired estimate:
    \begin{equation*}
        \phi_1 = \phi_2 + \mathscr{O}(|\Im z|^{-2}h^\infty\|\psi\|_{L^2(\R^d,\mathscr{B})}),
    \end{equation*}
    whence~\eqref{eq:egaliteinverse}.
    \smallskip 
    
    We can now terminate the proof.
    By \eqref{eq:HScomposition} and \eqref{eq:egaliteinverse}, we get 
    \begin{equation*}
        \chi(H^w)\Pi^{w,\perp} = \frac{1}{\pi i}\int_{\C} \bar{\partial} \tilde{\chi}(z) \left(\Pi^{w,\perp}H^w\Pi^{w,\perp}-z\right)^{-1}L(\mathrm{d}z) \Pi^{w,\perp} + \mathscr{O}(h^\infty).
    \end{equation*}
    Now, the operator $\Pi^{w,\perp}H^w\Pi^{w,\perp}$ is a selfadjoint operator on $L^2(\R^d,\mathscr{B})$ with domain $L^2(\R^d,\mathscr{A})$. Thus, we can use Helffer-Sjöstrand formula in the right-hand side term and we get
    \begin{equation*}
         \chi(H^w)\Pi^{w,\perp} = \chi(\Pi^{w,\perp}H^w\Pi^{w,\perp})\Pi^{w,\perp}+ \mathscr{O}(h^\infty).
    \end{equation*}
    Note that the principal symbol of the operator $\Pi^{w,\perp}H^w\Pi^{w,\perp}$ is given by $H_0 \Pi_{0}^\perp$. By Equation~\eqref{eq:localspectralH0}, this symbol is bounded  below by $\lambda+\eps$ and using Gärding inequality (see \cite[Théorème 2.1.18]{keraval}), we obtain the following localization of the spectrum of $\Pi^{w,\perp}H^w\Pi^{w,\perp}$:
    \begin{equation*}
        \sigma(\Pi^{w,\perp}H^w\Pi^{w,\perp}) \subset (\lambda+\eps/2,\infty),
    \end{equation*}
    from which we deduce that $\chi(\Pi^{w,\perp}H^w\Pi^{w,\perp}) = 0$ by the support condition on $\chi$. This concludes the proof.
\end{proof}

\section{Applications}\label{sec:application}

\subsection{Electro-magnetic wells in two dimensions}\label{sec.app1}
We prove here Theorem~\ref{thm:app_magnetic_well}.

\subsubsection{Reduction to our framework}
The proof starts as in \cite{Morin_Raymond_VuNgoc}, where the authors perform a transformation of the Hamiltonian through changes of variables and a rescaling, see \cite[Prop. 2.1 \& Lemma 2.4]{Morin_Raymond_VuNgoc}. 
Thanks to these considerations, the operator $h^{-1}\mathscr L_h$ is unitarily equivalent to the pseudodifferential operator 
\[
\mathscr{M}_h={\rm Op}^w_{h,x_2}\left({\rm Op}^w_{1,x_1}(m_h(x,\xi))\right),
\]
where
\begin{multline*}
  m_h(x,\xi)=\dot B(\xi_2+\sqrt{h}x_1,x_2+\sqrt{h}\xi_1)^2 \xi_1^2+ (x_1+\dot \alpha(\xi_2+\sqrt{h}x_1,x_2+\sqrt{h}\xi_1)\xi_1)^2 
\\+ \dot V (\xi_2+\sqrt{h}x_1,x_2+\sqrt{h}\xi_1)+hW(\xi_2+\sqrt{h}x_1,x_2+\sqrt{h}\xi_1)\,,
\end{multline*}
where the functions $\dot B$, $\dot V$ and $\dot \alpha$ are the pull-back by the change of variables $(q_1,q_2)\mapsto (x_1,x_2)$ of the functions $B$, $V$ and $\alpha:=\partial_2A_2(q_1,q_2)$. We also have
$W:=\frac 14 (\partial_1 \dot B)^2 +\frac 14 (\partial_1 \dot\alpha)^2$.

Therefore, we focus on the spectral analysis of $\mathscr{M}_h$ and we consider the spectrum in the window $(\mu_0,\mu_0 + Ch)$ (which is discrete as soon as $h$ is small enough). From \cite[Lemma 2.7]{Morin_Raymond_VuNgoc}, one knows that the corresponding eigenfunctions are microlocalized in $|(x_1,\xi_1)|\leq h^{-\frac12+\delta}$, for $\delta\in(0,\frac12]$. This fact leads to insert cutoff functions and to consider
\begin{equation*}
{\rm Op}^w_{h,x_2}\left({\rm Op}^w_{1,x_1}(p_h(x,\xi))\right),
\end{equation*}
with the symbol $p_h$ explicitly given:
\begin{equation*}
p_h(x,\xi)=\dot B(z_\delta(x,\xi) )^2 \xi_1^2+ (x_1+\dot \alpha(z_\delta(x,\xi))\xi_1)^2 + \dot V (z_\delta(x,\xi))+hW(z_\delta(x,\xi))\,,
\end{equation*}
where 
\[
z_\delta(x,\xi)=( \xi_2+\sqrt h \chi_\delta x_1, x_2+\sqrt h \chi_\delta \xi_1)\,,
\]
with $\chi_\delta= \chi(h^{\frac 12-\delta}(x_1,\xi_1))$ for some cutoff function $\chi\in\mathcal C^\infty(\mathbb R^2,[0,1])$ satisfying $\chi=1$ in a neighborhood of $(0,0)$.
\smallskip 

The operator ${\rm Op}^w_{h,x_2}\left({\rm Op}^w_{1,x_1}(p_h(x,\xi))\right)$ is a semiclassical pseudodifferential operator with  operator-valued symbol in $S(\R^2,\mathscr{L}(B^2(\R),L^2(\R)))$. In fact, we can perform a Taylor expansion of $p_h$ in $X_1 = (x_1,\xi_1)$ near $0$. Given $J\in\mathbb{N}$, we consider the Taylor expansion at order $J$ of $p_h$ and we denote it by $p_h^J$. We have $|p_h(x,\xi)-p_h^J(x,\xi)|\leq Ch^{\frac{J+1}{2}}|\chi_\delta (X_1)|^{J+1}|X_1|^{J+1}$. This leads to consider 
\[
H^{J,w}_h={\rm Op}^w_{h,x_2}{\rm Op}^w_{1,x_1}(p^J_h(x,\xi)),
\]
which essentially satisfies the assumptions of Corollary~\ref{cor:spectrum}, except that the expansion holds here in powers of $\sqrt{h}$. %Let us explain why we are in the framework of Corollary \ref{cor:spectrum}. 
Indeed, we can write 
\[
H^J_h = \sum_{n=0}^J h^{\frac n2} H_{\frac n2}\;\;\mbox{with}\;\;H_{\frac n 2} (x_2,\xi_2)= {\rm Op}^w_{1,x_1} (p_n(x,\xi))
\]
where the functions $(x_1,\xi_1)\mapsto p_n(x,\xi)$, once restricted on ${\chi_\delta=1}$, are polynomial functions in $(x_1,\xi_1)$ for all $(x_2,\xi_2)\in \R^2$, with the same parity as $n$. 
Moreover,
\[
p_0 (x,\xi)= \dot B(\xi_2,x_2) \xi_1^2 + (x_1+\dot\alpha(\xi_2,x_2)\xi_1)^2 + \dot V(\xi_2,x_2)\,.
\]
The principal operator symbol $H_0$, equipped with the domain $B^2(\R)$, is a selfadjoint operator and its spectrum is given by the simple eigenvalues 
\begin{equation}\label{def:mu_n}
\mu_p(x_2,\xi_2):=(2p-1) \dot B(\xi_2,x_2) +\dot V(\xi_2,x_2)\,,\quad p\geq 1\,.
\end{equation}
Due to Assumption~\ref{as:app_double_well}, we have $\mu_0=\min\mu_1(x_2,\xi_2)$ and
\[
\forall p\geq 1\,,\forall (x_2,\xi_2)\in\R^2\,,\quad
\mu_{p+1}(x_2,\xi_2) \geq \mu_p(x_2,\xi_2) + 2b_0\,. 
\]
Thus, the assumptions of Corollary \ref{cor:spectrum} are satisfied with $N=1$ and $\lambda=\mu_0+b_0$. Note also that we can write $\Pi_0(x_2,\xi_2)= \left( u_0(x_2,\xi_2),\cdot\right)_{L^2(\mathbb R_{x_1})} u_0(x_2,\xi_2)$ and that the function $x_1\mapsto u_0(x_2,\xi_2,x_1)$ belongs to the Schwartz class and is even for all $(x_2,\xi_2)\in\R^2$.
\smallskip 

The only adaptation to be discussed is the fact that the expansion holds in powers of $\sqrt{h}$, while the quantization procedure stays semiclassical in $h$. 
As observed in Remark~\ref{rem:construct}, % and~\ref{rem:subquad}, 
we still have the existence of a superadiabatic projector corresponding to the eigenvalue $\mu_1(x_2,\xi_2)$. There exists an operator-valued symbol $\Pi_h\sim\Pi_0+\sum_{n\geq 1} h^{\frac n2} \Pi_{\frac n 2}$ satisfying the relations 
\[ 
\Pi_h^w\Pi_h^w=\Pi_h^w +\mathscr{O}(h^\infty)\;\;
\mbox{and}\;\; [H_h^w , \Pi_h^w]= \mathscr{O}(h^\infty).
\]
The same proof as in Theorem~\ref{thm:spectral} allows to construct the symbol $\ell_h=\sum_{n\in\N}h^{\frac n2} \ell_{\frac n2}$ in $S(\R^2_{x_2,\xi_2},\mathscr{L}(L^2(\R_{x_1})))$
	such that 
	\begin{equation*}
		\ell^w (\ell^w)^* = {\rm Id}_{L^2(\R^2,\C)}+\mathscr{O}(h^\infty)\quad \mbox{and}\quad (\ell^w)^* \ell^w = \Pi^w+\mathscr{O}(h^\infty).
	\end{equation*}
	Moreover, $\sigma(H^w)\cap (-\infty,\mu_0+Ch)$ coincides 	up to $\mathscr{O}(h^\infty)$ with  the spectrum of the operator $M^w=\ell^w H^w (\ell^w)^*= \sum_{n\in \N} h^{\frac n2} 
		\lambda_{\frac n2}^w$ with 
\[
\lambda_0(x_2,\xi_2)=\mu_1(x_2,\xi_2)
\;\;\mbox{ and }\;\;
\lambda_{\frac 12}(x_2,\xi_2)=O(h^\infty),\;\;
\forall  (x_2,\xi_2)\in\R^2,
\]
where $\mu_1$ is given in~\eqref{def:mu_n}. The second relation comes from the following observations. By the symbolic calculus and by collecting the terms of order $\sqrt{h}$ in $\ell^w(\ell^*)^w=\mathrm{Id}+\mathscr{O}(h^\infty)$, we have
\[\ell_{\frac 12}\ell^*_0+\ell_0\ell_{\frac12}^*=0\,.
\]
This implies that 
\[
 \ell_{\frac 12}H_0\ell_0^*+\ell_0H_0\ell^*_{\frac 12}=\mu_1(\ell_{\frac 12}\ell^*_0+\ell_0\ell_{\frac12}^*)=0\,.
\]
Finally, the parity properties of $u_0$ and $H_{\frac12}$ yield
\[
\ell_0H_{\frac 12}\ell_0^*=
( H_{\frac 12}(x_2,\xi_2)u_0(x_2,\xi_2), u_0(x_2,\xi_2))_{L^2(\R_{x_1})}=\mathscr{O}(h^\infty)\,,
\]
the last estimate being the price to pay to replace $\chi_\delta$ by $1$ in the expression of $H_{\frac12}$.

\subsubsection{End of the proof of Theorem \ref{thm:app_magnetic_well}}
Consider $\mu\in\mathrm{sp}(\mathscr{M}_h)$ and $\psi$ a corresponding eigenfunction. Due to the microlocalization given in \cite[Lemma 2.7]{Morin_Raymond_VuNgoc}, we have $(H^{J,w}_h-\mu)\psi=\mathscr{O}(h^{\frac{J+1}{2}})$. Conversely, if $\mu\in\mathrm{sp}(H^{J,w}_h)$ and $\psi$ a corresponding eigenfunction, we also have $(\mathscr{M}_h-\mu)\psi=\mathscr{O}(h^{\frac{J+1}{2}})$. With the spectral theorem, this shows that the spectra of $\mathscr{M}_h$ and of $H^{J,w}_h$ in $(\mu_0,\mu_0+Ch)$ coincide modulo $\mathscr{O}(h^{\frac{J+1}{2}})$. Therefore, by using Corollary \ref{cor:spectrum}, we get that the spectra of $\mathscr{M}_h$ and of $M^w$ coincide modulo $\mathscr{O}(h^{\frac{J+1}{2}})$. We are left with the spectral analysis of the operator $M^w$. Under Assumption~\ref{as:app_double_well} (iii), this analysis is a classical result (see \cite{Sjostrand_1992,Charles_Ngoc_2006}). The idea is to relate the operator  $M^w$ to the harmonic oscillator $\mu_0 (-\partial_x^2+x^2)$ via a  Birkhoff normal form, the arguments of which can be transferred to $M^w$, despite its asymptotic expansions in $\sqrt h$, because these terms appear with powers larger than $2$.

\subsection{The Robin magnetic Laplacian} In this section, we prove  Theorem~\ref{theo:Robin}.
It is proved in  \cite[Section 2]{Fahs_LeTreust_Raymond} that the eigenfunctions of $\mathscr{L}_h$ attached to eigenvalues in $[ah,bh]$ are exponentially localized at the scale $h^{\frac12}$ near the boundary of $\Omega$. This suggests to consider a counterclockwise parametrization $\gamma$ by the curvilinear abscissa~$s$ and to perform a local change of coordinates 
\[
\Gamma : (s,t) \mapsto \gamma(s) - t \mathbf{n}(s),\;\; (s,t) \in \mathbb T_{2L} \times (0,t_0),
\]
which gives a parametrization of a neighborhood of the boundary. 

Then, one considers the variable $\tau$ which encodes the distance to the boundary with respect to the scale $\sqrt h$:
\[
t = \sqrt{h} \tau.
\]
These considerations lead to consider the operator (see \cite[Section 3]{Fahs_LeTreust_Raymond}):
\[
\mathfrak N_h \coloneqq \widehat{a}_h(s,\tau)^{-1} \mathcal T_h \widehat{a}_h(s,\tau)^{-1} \mathcal T_h - \widehat{a}_h(s,\tau)^{-1} \partial_\tau \widehat{a}_h(s,\tau)\partial_\tau
\]
acting on functions of $e^{\frac i h \mathfrak f_0 s}L^2\left( \T_{2L} \times \mathbb R_+ \right)$ satisfying the Robin boundary condition $\partial_\tau\psi(s,0)=\gamma\psi(s,0)$. Here,
\[
\widehat{a}_h(s,\tau) = 1 - \sqrt{h} \widehat{\tau} \kappa(s)\;\;\mbox{and}\;\;
\widehat{\tau} = \zeta(h^{\eta}\tau)\tau
\]
with $\eta \in (0,1/2)$, $\zeta$ a smooth cutoff function equal to $1$ near $0$, $\kappa(s)$ being the curvature of the boundary at the point $\Gamma(s)$, and the operator $\mathcal T_h $ is given by 
\[
\mathcal T_h = \Xi^w_0 - \tau + \sqrt{h}\frac{\kappa}{2}\widehat{\tau}^2
\]
where the symbol  $\Xi_0 : \mathbb R \to \mathbb R$ is a smooth increasing function, bounded with all its derivatives. We emphasize that, in this application, the Weyl quantization is considered with the semiclassical parameter $\sqrt{h}$.

As in the previous application, we consider $J$ and write the Taylor expansion of $\mathfrak N_h$, where we replace $\widehat{a}_h(s,\tau)^{-1}$ by its Taylor expansion at the order $J$. We denote by $\mathfrak{N}_h^J$ the corresponding operator. We notice that 
\[\mathfrak{N}_h^0=-\partial_\tau^2+(\Xi_0^w-\tau)^2\,.\]
As in \cite{Fahs_LeTreust_Raymond}, we assume that $\Xi_0$ is also chosen such that, for all $k \in \{1, \dots , N \}$, $\mu_k (\gamma, \Xi_0(\sigma)) = \mu_k (\gamma, \sigma)$ in a neighborhood of $\mu^{-1}_k ([a,b])$ and $\mu_k \circ \Xi_0$ takes its values in $(-\infty, a) \cup (b, +\infty)$ away from it.

Similarly as in the previous application and by using the localization estimates in \cite[Section 2]{Fahs_LeTreust_Raymond}, we see that, if $\mu\in\mathrm{sp}(\mathscr{L}_h)\cap[ah,bh]$ and if $\psi$ is a corresponding eigenfunction, we have
\[(\mathfrak{N}^J_h-\mu)\tilde\psi=\mathscr{O}(h^{\frac{J+1}{2}})\|\tilde\psi\|\,,\]
where $\tilde\psi(s,\tau)=\chi(h^{\frac{\eta}{2}}\tau)\psi\circ\Gamma(s,h^{\frac12}\tau)$.

For more details, see especially \cite[Proposition 3.1]{Fahs_LeTreust_Raymond}. Then, we notice that $\mathfrak{N}^J_h$ (which is not selfadjoint on the canonical $L^2$-space) satisfies the assumptions of Corollary \ref{cor:spectrum}, $N$ being the number of dispersion curves crossing the interval $[a,b]$. From the proof of this corollary, we deduce that 
\[(M^w-\mu)\tilde\Psi=\mathscr{O}(h^{\frac{J+1}{2}})\|\tilde\Psi\|\,.\]
Contrary to the analysis in Section \ref{sec.app1}, we are not in a selfadjoint situation (only the principal symbol of $M^w$ is a priori selfadjoint). Note that the principal symbol of $M^w_h$ is the diagonal of scalar symbols $(\mu_1(\gamma,\Xi_0(\sigma)),\ldots,\mu_N(\gamma,\Xi_0(\sigma)))$. Our assumption that there are no critical points for these scalar symbols in the window $[a,b]$ implies (see \cite[Proposition 1.9]{Fahs_LeTreust_Raymond} and the original article \cite{Rozenblum75}) the existence of a diagonal pseudodifferential operator $A^w$, with symbol in $S(\R^2)$ and real principal symbol, such that $D^w=e^{iA^w}M^we^{-iA^w}$ is a diagonal pseudodifferential operator whose symbol only depends on $\sigma$. The operator $e^{iA^w}$ is unitary modulo $\sqrt{h}$. Thus, we have
\[(D^w-\mu)e^{iA^w}\tilde\Psi=\mathscr{O}(h^{\frac{J+1}{2}})\|e^{iA^w}\tilde\Psi\|\,.\]
Since $D^w$ is a normal operator (it commutes with its adjoint), its resolvent is bounded by the inverse of the distance to the spectrum and we infer that $\mu$ is close to the spectrum of $D^w$ modulo $\mathscr{O}(h^{\frac{J+1}{2}})$.

Conversely, if $\mu\in\mathrm{sp}(D^w)\cap[a,b]$ and if $\Phi$ is a corresponding eigenfunction, we consider $e^{-iA^w}\Phi$, which solves $(M^w-\mu)(e^{-iA^w}\Phi)=0$. Thus, with Corollary \ref{cor:spectrum}, it can be transformed into a quasimode for $\mathfrak{N}^J_h$ (modulo $\mathscr{O}(h^\infty)$) and then, by using the exponential decay in $\tau$, for $\mathfrak{N}_h$ (modulo $\mathscr{O}(h^{\frac{J+1}{2}})$).

\begin{remark}[Exponential decay of $(L^w)^*$ and $(\ell^w)^*$]
According to the exponential decay of the eigenfunction of the De Gennes operator and the boundedness of the function $\Xi_0^w$, the function $u_0$ and all its derivatives have an exponential decay in $\tau$, uniformly in $(s,\sigma)$. Let us consider $\ell,L$ defined in Lemma~\ref{lem:existenceell}, then $L_0 = \ell_0 = (u_0,\cdot)_\mathscr{B}$ and $L^w (\ell^w)^* = {\rm Id}_{L^2(\R^d,\mathscr B)}+\mathscr{O}(h^\infty)$ so $L_0 \ell_1^* + L_1 \ell_0^* + \frac{1}{2i} \left\{L_0, \ell_0^*\right\} = 0.$
Then,
\[
    L_1 = -L_0 \ell_1^*\ell_0 - \frac{1}{2i} \left\{L_0, \ell_0^*\right\}\ell_0\,.
\]
By taking the adjoint, we see that
\[
    L^*_1 = -\ell^*_0 \ell_1 L^*_0 - \frac{1}{2i} \ell^*_0\left\{L_0, \ell_0^*\right\}^*\,.
\]
By using the exponential decay of $u_0$ (and of $\ell_0^*$) and the boundedness of the $\ell_j$ and $L_j$ (and of their derivatives) in $(s,\sigma)$, we get that $L_1^*$ has an exponential decay in $\tau$ uniformly in $(s,\sigma)$. We proceed similarly for $\ell_1^*$ and by induction for the $L_j$ and $\ell_j$ for $j\geq 2$.

%We conclude to the exponential decay of $(\ell^w)^*$ and all its derivatives in $s$ and $\tau$ by considering the $h^n$-term of the equality $L^w (\ell^w)^* = {\rm Id}_{L^2(\R^d,\mathscr B)}+\mathscr{O}(h^\infty)$.
\end{remark}

Similar considerations as in \cite[Proposition 3.1]{Fahs_LeTreust_Raymond} allow to get a quasimode for $h^{-1}\mathscr{L}_h$ attached to $\mu$ modulo $\mathscr{O}(h^{\frac{J+1}{2}})$. Since $\mathscr{L}_h$ is selfadjoint, this shows that $\mu h$ is close to the spectrum of $\mathscr{L}_h$ modulo $\mathscr{O}(h^{\frac{J+1}{2}})$.

\appendix

\section{The Sylvester Theorem, proof and comments}\label{sec:sylvester}

We recall that  if $K$ is a selfadjoint operator on a Hilbert space $\mathcal H$, and $\Lambda\subset \sigma(H)$ a subset of the spectrum of $H$ satisfying 
\begin{equation}\label{B0}
\exists \delta>0,\;\; d(\Lambda, \sigma(K)\setminus \Lambda)\geq \delta,
\end{equation}
then, if $\gamma$ is a  closed contour in the complex plane with winding number one around~$\sigma(K)$ (see Figure~\ref{fig:contour}), the spectral projector on the eigenspace associated with $\Lambda$ is given by 
\begin{equation}\label{eq:proj}
\Pi_\Lambda=  -\frac 1{2\pi i} \oint_\gamma (K-z)^{-1} dz. 
\end{equation}
The resolution of the Sylvester problem makes use of this formula and takes advantage of complex analysis.

\begin{theorem}[\cite{bhatia_rosenthal}]
\label{thm:sylvester}
Let $K_0$ and $K_1$ be two selfadjoint operators on some Hilbert space~$\mathcal H$. Assume there exists $\delta>0$ for which $\sigma(K_0)\subset (0, \delta )$ and 
$\sigma(K_1)\subset (\delta,+\infty)$.
Let $\gamma$ be a  closed contour in the plane with winding number one around~$\sigma(K_0)$ and zero around $\sigma(K_1)$.  Then,  for any operator $Y$ the equation 
\begin{equation}\label{sylvester_eq}
K_0X-XK_1=Y
\end{equation}
has a unique solution $X$ that can be expressed as
\begin{equation}\label{solution_sylvester}
	X= -\frac 1{2\pi i} \oint_\gamma (K_0-z)^{-1} Y (K_1-z)^{-1} dz.
\end{equation}
\end{theorem}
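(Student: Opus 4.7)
The plan is to establish \emph{existence} by verifying that the operator $X$ given by the contour integral formula~\eqref{solution_sylvester} solves the Sylvester equation~\eqref{sylvester_eq}, and then to establish \emph{uniqueness} separately. As a preliminary, one notes that the contour $\gamma$ lies in the resolvent set of both $K_0$ and $K_1$---thanks to the gap assumption $\sigma(K_0)\subset(0,\delta)$, $\sigma(K_1)\subset(\delta,+\infty)$ and the prescribed winding numbers---so the integrand is a continuous $\mathscr{L}(\mathcal{H})$-valued function of $z\in\gamma$ and the integral is well-defined (as a Bochner or Riemann integral on a compact curve).

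For existence, the main step is to expand $K_0 X$ and $X K_1$ using the resolvent identities
\[
K_0(K_0-z)^{-1} = I + z(K_0-z)^{-1} \quad\text{and}\quad K_1(K_1-z)^{-1} = I + z(K_1-z)^{-1}.
\]
Once these are inserted into~\eqref{solution_sylvester}, the two contributions of the form $\oint z(K_0-z)^{-1} Y (K_1-z)^{-1}\,dz$ appear with identical signs in $K_0 X$ and in $X K_1$ and therefore cancel in the difference. One is left with two residual integrals: an integral $\oint_\gamma Y(K_1-z)^{-1}\,dz$, which vanishes by Cauchy's theorem because $z\mapsto(K_1-z)^{-1}$ is holomorphic on a neighborhood of the region enclosed by $\gamma$ (the winding number around $\sigma(K_1)$ is zero), and an integral $\oint_\gamma (K_0-z)^{-1} Y\,dz$, which is evaluated through the spectral projector formula~\eqref{eq:proj} and, since $\gamma$ encircles all of $\sigma(K_0)$ once, delivers a multiple of $Y$. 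Combining the two contributions with the correct sign conventions inherited from~\eqref{eq:proj} yields $K_0 X - X K_1 = Y$.

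For uniqueness, the cleanest route is functional calculus: if $K_0 X = X K_1$, then by iteration $p(K_0)X = X p(K_1)$ for every polynomial $p$, and a standard spectral-theorem approximation extends this to $f(K_0)X = X f(K_1)$ for every bounded Borel function $f$. Choosing $f = \mathbf{1}_{\sigma(K_0)}$---which is legitimate because the spectral gap separates $\sigma(K_0)$ from $\sigma(K_1)$---one has $f(K_0) = I$ and $f(K_1) = 0$, whence $X = 0$. Alternatively, repeating the existence computation on $Y := K_0 X - X K_1$ shows directly that the map $Y\mapsto-\frac{1}{2\pi i}\oint_\gamma(K_0-z)^{-1}Y(K_1-z)^{-1}\,dz$ is a two-sided inverse of $X\mapsto K_0 X - X K_1$, giving uniqueness for free. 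The main technical point is the commutation of the (possibly unbounded) operators $K_0, K_1$ with the contour integral, which is justified by uniform boundedness of the resolvents along the compact contour $\gamma$ together with the closedness of $K_0$ and $K_1$; this is routine but worth stating explicitly, as is the careful bookkeeping of signs imposed by the orientation convention underlying~\eqref{eq:proj}.
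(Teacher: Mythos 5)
Your existence argument coincides with the paper's: the same resolvent identities produce the cancellation of the $z(K_0-z)^{-1}Y(K_1-z)^{-1}$ terms, and the two residual integrals evaluate to $0$ (winding number zero around $\sigma(K_1)$) and to $Y$ (via the projector formula~\eqref{eq:proj}), respectively. The only genuine difference is in the uniqueness step. The paper's argument is a contraction: from $K_0X_0 = X_0K_1$ one gets $X_0 = (\delta^{-1}K_0)^nX_0(\delta K_1^{-1})^n$, and since $\|\delta^{-1}K_0\|<1$ and $\|\delta K_1^{-1}\|<1$ (both by the gap assumption, noting that $K_0$ is bounded and $K_1$ is bounded below by $\delta$), sending $n\to\infty$ forces $X_0=0$. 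Your route via functional calculus --- extend $p(K_0)X=Xp(K_1)$ from polynomials to bounded Borel functions and evaluate at $f=\mathbf 1_{\sigma(K_0)}$ --- reaches the same conclusion, but be aware that $K_1$ may be unbounded here (in the paper's application $K_1=H_0\Pi_0^\perp$), so the polynomial identity lives on $\mathrm{Dom}(K_1^n)$ and the ``standard approximation'' must pass through the resolvent intertwining $(K_0-z)^{-1}X=X(K_1-z)^{-1}$ (or, more simply, through the \emph{bounded} operators $K_0^{-1},K_1^{-1}$, to which your Borel-calculus argument applies without domain issues). The paper's contraction argument sidesteps functional calculus entirely and is arguably more elementary; your version, once the domain subtlety is handled, is conceptually transparent and generalizes to normal operators. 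Your alternative remark that the contour formula is a two-sided inverse of $X\mapsto K_0X-XK_1$ is correct and is in fact the shortest route to uniqueness.
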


\begin{figure}[H]
    \centering
    \includegraphics[width=9cm,height=2cm]{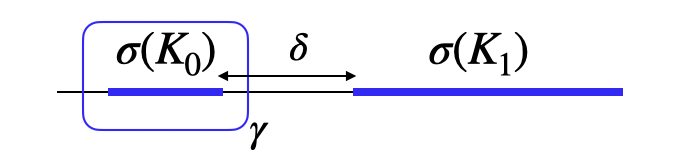}
    \caption{The  contour  $\gamma$
 used in Theorem~\ref{thm:sylvester}}
    \label{fig:contour}
\end{figure}

\begin{proof}
We observe that  for $z\in \gamma$, 
\begin{align*}
&K_0(K_0-z)^{-1} Y (K_1-z)^{-1} - (K_0-z)^{-1} Y (K_1-z)^{-1} K_1\\
&\qquad =
(K_0-z)(K_0-z)^{-1} Y (K_1-z)^{-1}-(K_0-z)^{-1} Y (K_1-z)^{-1}(K_1-z)\\
&\qquad = Y(K_1-z)^{-1} -(K_0-z)^{-1}Y.
\end{align*}
Therefore, we deduce from 
\[ 
\frac 1{2\pi i} \oint_\gamma (K_1-z)^{-1} dz=0 \;\;\mbox{and}\;\;
\frac 1{2\pi i} \oint_\gamma (K_0-z)^{-1} dz = 1,
\]
that the operator $X$ given by~\eqref{solution_sylvester} satisfies~\eqref{sylvester_eq}.
For proving the uniqueness of the solution to~\eqref{sylvester_eq}, we show that $X_0=0$ is the only solution to $K_0X_0=X_0K_1$.
We observe that if $K_0X_0=X_0K_1$, then  for all $n\in \N$, $K_0^nX_0=X_0K_1^n$. 
Since $\sigma(K_1)\subset(\delta,+\infty)$ with $\delta>0$ positive, $K_1$ is invertible with $\|K_1^{-1}\|< \delta ^{-1}$, and
\[
\forall n\in \N, \;\; X_0= (\delta ^{-1} K_0)^n X_0 (\delta  K_1^{-1})^{n}.
\]
Letting $n$ go to $+\infty$, we deduce from $\| \delta^{-1} K_0\|,\|\delta K_1^{-1}\|<1$  that $X_0=0$.
\end{proof}

\medskip

\noindent The explicit formula of the Sylvester problem has important consequences.

\begin{corollary}\label{cor:smooth}
We continue with the setting of  Theorem~\ref{thm:sylvester} and we 
also
assume that $K_0$, $K_1$ and $Y$ depend on some parameters $(x,\xi)$ in $\Omega$,
an open subset of $\R^{2d}$.   
\begin{enumerate}
   \item 
 Assume that $K_0$, $K_1$ and $Y$ are  smooth  (resp. analytic) functions of $(x,\xi)$  in~$\Omega$. Then,  the function $(x,\xi)\mapsto X$ given by~\eqref{solution_sylvester} is smooth (resp. analytic) in $\Omega$. 
\item Assume that $K_0$, $K_1$ and $Y$ are  smooth and that there exists $\nu\in\N$ such that 
\[
\forall \alpha\in\N^{2d},\;\;
\exists C_\alpha>0,\;\;
\forall (x,\xi)\in\Omega,\;\; 
\|\partial^\alpha_z Y(z)\|_{\mathcal L(\mathcal H)}\leq C_\alpha \delta^{-|\alpha|-\nu},
\]
while $K_0$ and $K_1$ have uniformly bounded derivatives.
Then, there exists $C_{\alpha}>0$ such that 
\[
\forall (x,\xi)\in\Omega,\;\; \| \partial_{x,\xi}^\alpha X(x,\xi)\|_{\mathcal H}
%_{\mathscr L(\mathscr A,\mathscr B)}
\leq C_{\alpha} \delta^{-|\alpha|-\nu-1}
%\sup_{(x,\xi)\in\Omega, |\beta|\leq |\alpha|} \| \partial_{x,\xi}^\beta Y(x,\xi)\|_{\mathcal H}
.
\]
\end{enumerate}
\end{corollary}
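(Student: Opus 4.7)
The plan is to exploit the explicit formula~\eqref{solution_sylvester} expressing $X$ as a contour integral of $(K_0-z)^{-1}\,Y\,(K_1-z)^{-1}$ along $\gamma$, and to differentiate under the integral sign. The key observation is that, for every $z\in\gamma$, the resolvents $(K_j(x,\xi)-z)^{-1}$, $j=0,1$, are well defined and depend smoothly (resp.\ analytically) on $(x,\xi)\in\Omega$, with the classical identity
\[
\partial_{x_i}(K_j-z)^{-1}=-(K_j-z)^{-1}(\partial_{x_i}K_j)(K_j-z)^{-1}.
\]
Iterating, $\partial_{x,\xi}^\alpha (K_j-z)^{-1}$ is a finite linear combination of products of $|\alpha|+1$ resolvents intertwined with partial derivatives of $K_j$ whose orders sum to~$|\alpha|$.

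For part~(1), these expressions depend continuously on $(x,\xi,z)$ and are bounded uniformly on any compact subset of $\Omega$ and on $\gamma$. Differentiation under the integral sign is therefore legitimate at every order, which yields smoothness of $X$. In the analytic case, the integrand is jointly analytic in $(x,\xi)$ and in $z$ (outside the spectra of $K_0$ and $K_1$), so the integral preserves analyticity, e.g.\ via Morera's theorem applied to any small loop in a complexification of $\Omega$.

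For part~(2), I would first fix once and for all a contour $\gamma$ of length $\mathcal{O}(\delta)$ encircling $\sigma(K_0)$ and staying at distance comparable to $\delta$ from $\sigma(K_0)\cup\sigma(K_1)$; this is possible thanks to the uniform spectral gap. On such $\gamma$, one has $\|(K_j(x,\xi)-z)^{-1}\|\leq C\delta^{-1}$ uniformly in $(x,\xi)\in\Omega$ and $z\in\gamma$. Combined with the uniform boundedness of the derivatives of $K_j$ and the recursive formula above, an induction on $|\alpha|$ gives
\[
\sup_{z\in\gamma}\bigl\|\partial_{x,\xi}^\alpha(K_j-z)^{-1}\bigr\|_{\mathcal{L}(\mathcal H)}\leq C_\alpha\,\delta^{-|\alpha|-1}.
\]
Leibniz's rule applied to the triple product $(K_0-z)^{-1}\,Y\,(K_1-z)^{-1}$ together with the hypothesis $\|\partial^\alpha Y\|\leq C_\alpha\delta^{-|\alpha|-\nu}$ then bounds each term of $\partial^\alpha_{x,\xi}\bigl[(K_0-z)^{-1}Y(K_1-z)^{-1}\bigr]$ by $C\,\delta^{-|\alpha|-\nu-2}$ on $\gamma$. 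Multiplying by the length $\mathcal{O}(\delta)$ of the contour yields the stated bound $\|\partial_{x,\xi}^\alpha X\|\leq C_\alpha\delta^{-|\alpha|-\nu-1}$.

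The main delicate point is the $\delta$-dependence of the geometry: one has to check carefully that a single contour can be chosen with length $\sim\delta$ and distance $\sim\delta$ from both spectra, and that the resolvent bound $C\delta^{-1}$ is genuinely uniform in $(x,\xi)\in\Omega$. This relies crucially on the gap between $\sigma(K_0)$ and $\sigma(K_1)$ being a uniform constant $\delta$ across $\Omega$. Once this is secured, the remainder of the proof is a pure bookkeeping exercise based on Leibniz's rule and the recursive formula for resolvent derivatives.
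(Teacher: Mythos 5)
Your bookkeeping — Leibniz on the triple product, the recursive resolvent derivative formula, the counting $\delta^{-|\alpha_1|-1}\cdot\delta^{-|\alpha_2|-\nu}\cdot\delta^{-|\alpha_3|-1}$, and the final multiplication by the contour length — is correct. But the geometric step you yourself flag as delicate is in fact where the argument breaks down in the regime that matters. A closed contour of length $\mathscr{O}(\delta)$ enclosing $\sigma(K_0)$ can only exist if $\sigma(K_0)$ itself has diameter $\mathscr{O}(\delta)$. That is literally allowed by the wording of Theorem~\ref{thm:sylvester} ($\sigma(K_0)\subset(0,\delta)$), so your proof is not wrong in a narrow reading. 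However, the corollary is applied in Lemma~\ref{lem:derivative_proj} with $K_0=H_0\Pi_0$ restricted to $\operatorname{Ran}\Pi_0$, whose spectrum is $\sigma_0\subset[m,M]$, a $\delta$-independent interval, while only the \emph{gap} to $\sigma(K_1)$ is of size $\delta$. In that setting any admissible contour has length bounded below by a $\delta$-independent constant, and the estimate ``resolvent $\leq C\delta^{-1}$ uniformly on $\gamma$, multiply by $|\gamma|$'' loses one power of $\delta$: you would get $\delta^{-|\alpha|-\nu-2}$ instead of $\delta^{-|\alpha|-\nu-1}$. The ``uniform spectral gap'' gives you room for a contour, but says nothing about its length.

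The paper avoids this by choosing a tall rectangular contour whose vertical sides sit at horizontal distance $\delta/2$ from $\sigma(K_0)$ but extend to $\Im z=\pm 1$, and exploiting that the resolvent is \emph{not} uniformly of size $\delta^{-1}$ on these sides — it improves like $(t^2+\delta^2)^{-1/2}$ as one moves off the real axis. After Leibniz, the bad portion of the integrand is concentrated near $t=0$, and the substitution $t\mapsto\delta t$ gives
\[
\int_{-1}^{1}\frac{\mathrm{d}t}{\bigl(\sqrt{t^2+\delta^2}\bigr)^{k+2}}\ \leq\ C_k\,\delta^{-k-1},
\]
which is the same gain of exactly one power of $\delta$ that your short contour produces, but without requiring $\operatorname{diam}\sigma(K_0)\lesssim\delta$. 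To repair your argument you would either need to invoke the weighted resolvent bound along the vertical legs and estimate the resulting one-dimensional integral, as the paper does, or restrict artificially to the case where $\sigma(K_0)$ fits in a $\delta$-interval — which would not cover the application to the superadiabatic projectors.
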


\begin{proof}
Part (1) comes from the formula~\eqref{solution_sylvester}. For Part (2), we use the formula~\eqref{eq:proj} and choose the contour $\gamma$. We set $a=\inf \sigma(K_0)$, $b=\sup\sigma(K_0)$ and for $\eta>0$, we consider the rectangles $\mathscr R_\eta$ based on the points $(a-\eta,-1-\eta)$, $(a-\eta,1+\eta)$, $(b+\eta,1+\eta)$ and $(b+\eta,-1-\eta)$.
We choose a smooth  curve $\gamma$, strictly included in $(\mathscr R_{\delta} \setminus \mathscr R_0)$ and consisting in four connected branches, $\gamma_\ell$, $1\leq \ell \leq 4$, that are two by two disjoint with 
\[
\gamma_1=\{ z=a- \delta/2-it, \, -1\leq t\leq 1\}, \;\; \gamma_3=\{ z=b+\delta/2 + it, \, -1\leq t\leq 1\}
\]
and $\gamma_2\subset \{1\leq  \Im(z)\leq 1+\delta \}$,  $\gamma_4\subset \{-1-\delta \leq  \Im(z)\leq -1\}$ (see Figure~\ref{fig:contour_rect}).

\begin{figure}[H]
    \centering
    \includegraphics[width=10cm,height=5cm]{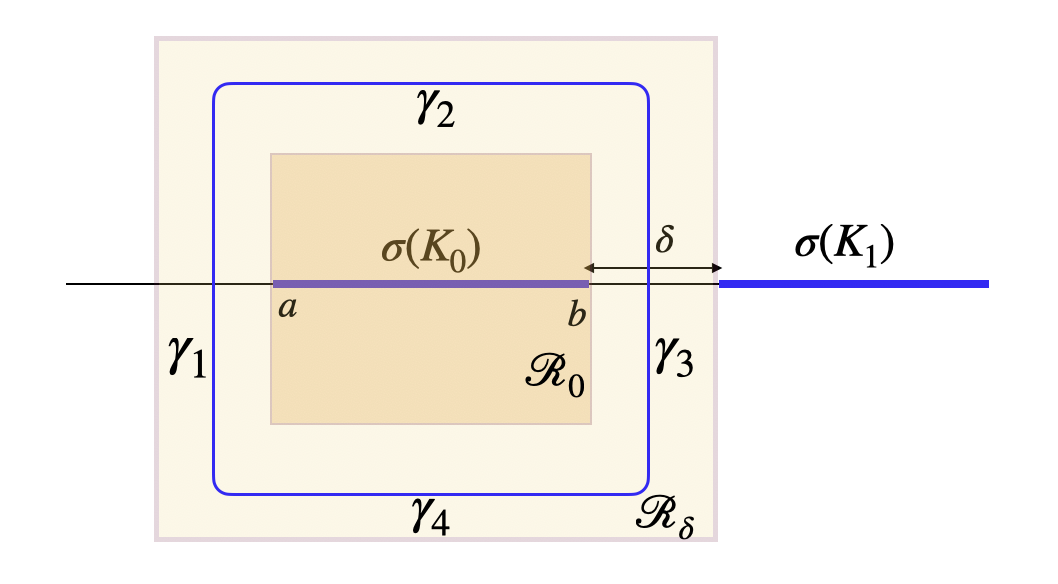}
    \caption{The  contour  $\gamma=\cup_{1\leq i\leq 4}\gamma_i$ used in the proof of Corollary~\ref{cor:smooth}.}
    \label{fig:contour_rect}
\end{figure}

By the standard resolvant estimates and Pythagore's theorem, there exists a constant $c>0$ such that 
\[
\forall z \in\gamma _1\cup \gamma_3,\;\; 
\| (K_0-z)^{-1}\|_{\mathscr L(\mathscr A,\mathscr B)}\leq c\frac 1{\sqrt{t^2+\delta^{2}}},
\]
while for all  $z\in\gamma_2\cup\gamma_4$, this norm is uniformly bounded in $\delta$.
Similarly, 
\[
\forall z \in\gamma_3,\;\; 
\| (K_1-z)^{-1}\|_{\mathscr L(\mathscr A,\mathscr B)}\leq c\frac 1{\sqrt{t^2+\delta^{2}}},
\]
while for all  $z\in\gamma_1\cup \gamma_2\cup\gamma_4$, this norm is uniformly bounded in $\delta$.

 We deduce from the boundedness of the derivatives of $K_0$ and $K_1$ that for all $\alpha\in\N^{2d}$, there exists $c_\alpha>0$ such that 
\begin{align*}
&\forall z \in\gamma _1\cup \gamma_3,\;\; 
\| \partial^\alpha_{x,\xi}\left( (K_0-z)^{-1}\right)\|_{\mathscr L(\mathscr A,\mathscr B)}\leq c_\alpha \frac 1{\left(\sqrt {t^2+\delta^2}\right)^{|\alpha|+1}},\\
&\forall z \in\gamma_3,\;\; 
\| \partial^\alpha_{x,\xi}\left( (K_1-z)^{-1}\right)\|_{\mathscr L(\mathscr A,\mathscr B)}\leq c_\alpha \frac 1{\left(\sqrt {t^2+\delta^2}\right)^{|\alpha|+1}},
\end{align*}
while these derivatives are uniformly bounded, independently of $\delta$ on the other branches of~$\gamma$.

As a consequence, Equation~\eqref{solution_sylvester} implies the  existence of a constant $C_\alpha>0$ associated with  $\alpha\in\N^{2d}$, and   such that 
\begin{align*}
\| \partial^\alpha_{x,\xi} X\|_{\mathscr L(\mathscr A,\mathscr B)}
& \leq C_\alpha \sum_{\alpha_1+\alpha_2+\alpha_3=\alpha} \delta^{-|\alpha_3|-\nu}\int_{-1}^{+1} 
\frac {dt}{\left(\sqrt {t^2+\delta^2}\right)^{|\alpha_1|+|\alpha_2|+2}}\\
& \leq 
C_\alpha \sum_{\alpha_1+\alpha_2+\alpha_3=\alpha} \delta^{-|\alpha|-\nu-1}\int_{-\infty} ^{+\infty} \frac {du}{\left(\sqrt {u^2+1}\right)^{|\alpha_1|+|\alpha_2|+2}}= \mathscr{O}(\delta^{-|\alpha|-\nu-1}),
\end{align*}
which concludes the proof.
\end{proof}

\begin{remark}\label{rem:deriv_proj}
The arguments of the proof of Theorem~\ref{thm:sylvester} also show that by~\eqref{B0}, if $K$ is a smooth function of the parameters $(x,\xi)$, with bounded derivatives  in an open subset $\Omega$ of $\R^d$, then  $\Pi_\Lambda$ given by~\eqref{eq:proj} satisfy 
\[
\forall\alpha\in\N^{2d},\;\;
\| \partial^\alpha \Pi_\Lambda\|_{\mathscr{L}(\mathscr{A},\mathscr{B})} = \mathscr{O}(\delta^{-|\alpha|}).
\]
\end{remark}

\bibliographystyle{abbrv}
\bibliography{biblio}

\end{document}